\renewcommand{\leq}{\leqslant}
\renewcommand{\geq}{\geqslant}
\newcommand{\folF}{\mathscr{F}}
\newcommand{\folR}{\mathscr{R}}
\title{Flowability of plane homeomorphisms}
\author[F. Le Roux]{\firstname{Fr\'ed\'eric} \lastname{Le Roux}}
\address{Universit\'e Paris Sud, Laboratoire de math\'ematiques, Bat. 425, 91405 Orsay Cedex, France}
\email{frederic.le-roux@math.u-psud.fr}
\author[A. G. O'Farrell]{\firstname{Anthony}  \middlename{G.} \lastname{O'Farrell}}
\address{Department of Mathematics, Logic House, National Univeristy of Ireland Maynooth, Maynooth, County Kildare, Ireland}
\email{anthonyg.ofarrell@gmail.com}
\author[M. Roginskaya]{\firstname{Maria} \lastname{Roginskaya}}
\address{Department of Mathematics, Chalmers University of Technology, S-412 96 G\H{o}teborg, Sweden}
\email{maria@math.chalmers.se}
\author[I. Short]{\firstname{Ian} \lastname{Short}}
\address{Department of Mathematics and Statistics, The Open University, Milton Keynes, MK7 6AA, United Kingdom}
\email{i.short@open.ac.uk}
\thanks{The second and fourth authors' work was supported by
Science Foundation Ireland grant 05/RFP/MAT0003. The second author was also supported by the ESF Network HCAA}
\subjclass{37E30, 37E35, 58F25}
\keywords{Brouwer homeomorphism, flow, foliation, homeomorphism, plane, Reeb component.}
\begin{document}

\begin{abstract}
We describe necessary and sufficient conditions for an orientation preserving fixed point free planar homeomorphism that preserves the standard Reeb foliation to embed in a planar flow that leaves the foliation invariant.
\end{abstract}

\maketitle

%%%%%%%%%%%%%%%%%%%%%%%%%%%%%%%%%%%%%%%%%%%%%
\section{Introduction}\label{S: introduction}
%%%%%%%%%%%%%%%%%%%%%%%%%%%%%%%%%%%%%%%%%%%%%

The object of this paper is to give necessary and sufficient conditions for an orientation preserving fixed point free planar homeomorphism that preserves the standard Reeb foliation to embed in a planar flow that leaves the foliation invariant.

A \emph{nonsingular oriented foliation of the plane} is a collection $\folF$ of oriented open arcs, called \emph{leaves}, that partition $\mathbb{R}^2$ and satisfy the following property. For each point $p$ in $\mathbb{R}^2$ there is an open neighbourhood $N$ of $p$ and an orientation preserving homeomorphism $\varphi: N\rightarrow ]0,1[^2$, called a \emph{chart} for $\folF$, such that if $L$ is a leaf of $\folF$ that intersects $N$, then every connected component of
 $\varphi (N\cap L)$ is a horizontal segment $]0,1[\times\{a\}$, for some point $a$ in $]0,1[$. The orientation of $N\cap L$ must correspond under $\varphi$ to the usual orientation of $]0,1[\times\{a\}$. We denote the leaf of a point $x$ by $L(x)$,  and, given points $x_1$ and $x_2$ on $L(x)$, we use the notation $x_1<x_2$ to mean that $x_1$ occurs before $x_2$ according to the ordering of $L(x)$.

The simplest such foliation consists of a complete collection of oriented parallel lines. Foliations homeomorphic to this foliation are known as \emph{trivial foliations}. The simplest homeomorphism class of non-trivial foliations are the \emph{Reeb foliations}---one is shown in Figure~\ref{F: Reeb}---and it is to these foliations that we pay most attention. We can describe explicitly an oriented Reeb foliation $\folR$ as the collection of lines $\{(t,v)\,:\, t\in\mathbb{R}\}$, for $|v|\geq 1$, together with the collection of lines
\[
\left\{\left(u-\left(\cos\left(\frac{\pi}{2}t\right)\right)^{-1},t\right)\,:\, -1<t<1\right\},
\]
 for $u$ in $\mathbb{R}$. The line $\Delta$ given by the equation $y=-1$, and  the line $\Delta'$ given by the equation $y=1$, are the \emph{nonseparated leaves} of $\folR$.  We orient $\Delta$ to the right and $\Delta'$ to the left; this produces a unique orientation on $\folR$ which is indicated by arrows in Figure~\ref{F: Reeb}. Let
\begin{eqnarray*}
U_1 = \{(x,y)\,:\, y<-1\}, \quad
U_2 = \{(x,y)\,:\, -1<y<1\},\\
U_3 = \{(x,y)\,:\, y>1\}.
\end{eqnarray*}

\begin{figure}[ht]
\centering
\includegraphics[scale=1.0]{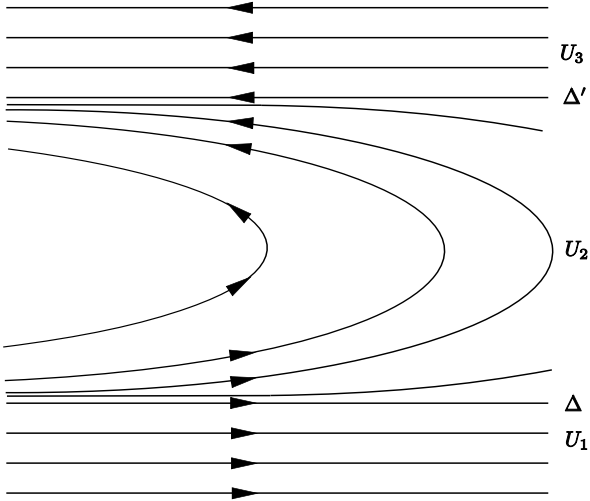}
\caption{The Reeb foliation.}
\label{F: Reeb}
\end{figure}
Refer to \cite{HaRe1957} for an excellent introduction to plane foliations.

A \emph{flow} on a topological space $X$ is a continuous homomorphism $t\mapsto \Phi^t$ from the group $(\mathbb{R},+)$ to the group of homeomorphisms of $X$, equipped with the compact-open topology. We denote this flow by $(\Phi^t)$. We shall only study flows on $\mathbb{R}$ and $\mathbb{R}^2$. A \emph{Brouwer homeomorphism} is an orientation preserving homeomorphism of $\mathbb{R}^2$ which has no fixed points. Given a nonsingular oriented foliation  $\folF$ of $\mathbb{R}^2$, an \emph{$\folF$-homeomorphism} is a Brouwer homeomorphism $f$ such that, for each leaf $L$ of $\folF$, $f(L)=L$ and $f(x)>x$ for each element $x$ of $L$. A flow $(\Phi^t)$ is an \emph{$\folF$-flow} if $\Phi^t$ is an $\folF$-homeomorphism for each positive number $t$.

We address the problem of characterizing those  $\folF$-homeomorphisms that arise as the time one map of an $\folF$-flow. In other words, given an $\folF$-homeomorphism $f$, we ask whether there is an $\folF$-flow $(\Phi^t)$ such that $f=\Phi^1$. Such maps $f$ we call \emph{$\folF$-flowable}. There are examples of $\folF$-homeomorphisms that do not arise as the time one map of any planar flow at all (see \cite{BeLe2003}). The problem of characterizing $\folF$-flowable maps is straightforward when $\folF$ is the trivial foliation: each  $\folF$-homeomorphism is $\folF$-flowable, and there is only one conjugacy class of flows. See  \cite{An1965,Jo1972,Jo1982,Ut1982} for related literature. We focus primarily on the Reeb foliation $\folR$, and we characterize the $\folR$-flowable $\folR$-homeomorphisms. In contrast to the trivial foliation, there are uncountably many conjugacy classes of $\folR$-flows (see \cite{Le1999}).

Our techniques extend easily to more general plane foliations; for example, those foliations described by the last theorem in \cite{Go1972}, each of which has finitely many nonseparated leaves. It is more difficult to characterize the $\folF$-flowable $\folF$-homeomorphisms when the foliation $\folF$ has a nonseparated leaf which is the limit of a sequence of nonseparated leaves.

We now describe two conditions which characterize the $\folR$-flowable $\folR$-homeomorphisms. Recall that $\Delta$ and $\Delta'$ are the nonseparated leaves of $\folR$.

An $\folR$-homeomorphism $f$ has the \emph{four point matching property} when, given
\begin{itemize}
\item a pair $(x,x')$ in $\Delta\times\Delta'$,
\item a sequence $(x_k)$ in $\mathbb{R}^2$ such that $x_k\to x$ and $f^{k}(x_k)\to x'$, and
\item a sequence $(y_k)$ in $\mathbb{R}^2$ such that $y_k\in L(x_k)$,
\end{itemize}
it follows that the sequence $(y_k)$ converges to a point $y$ in $\Delta$ if and only if the sequence
$(f^{k}(y_k))$ converges to a point $y'$ in $\Delta'$. If $f$ is the time one map of  an $\folF$-flow $(\Phi^t)$
then $f$ has the four point matching property. Indeed, if we assume that $(y_k)$ converges to $y$ then, given the three conditions stated above, there are unique real numbers $t$ and $(t_k)$  such that $\Phi^t(x)=y$  and $\Phi^{t_k}(x_k)=y_k$ for each positive integer $k$. Since $(y_k)$ converges to $y$, it follows that $(t_k)$ converges to $t$. Given that $f^{k}(y_k)= \Phi^{t_k}(f^{k}(x_k))$, we deduce that the sequence $(f^{k}(y_k))$ converges to $\Phi^t(x')$. The converse can be verified in a similar manner.

Assuming the four point matching property, we  define a relation $\sim_f$ on $\Delta\times\Delta'$ by the property that $(x,x')\sim_f(y,y')$ if and only if there are sequences $(x_k)$ and $(y_k)$ as stated above such that $x_k\to x$, $y_k\to y$, $f^{k}(x_k)\to x'$, and $f^{k}(y_k)\to y'$. In Section~\ref{S: DeltaDelta'} we prove that the relation $\sim_f$  is an equivalence relation. The meaning of this equivalence relation when $f$ is an $\folR$-flowable map is that the time taken to flow from $x$ to $y$ is equal to the time taken to flow from $x'$ to $y'$. Using $\sim_f$ we can define our second condition for characterizing $\folR$-flowable homeomorphisms.

An $\folR$-homeomorphism $f$ that satisfies the four point matching property is said to satisfy the \emph{eight point matching property} if,
whenever we are given points
$x_1$, $y_1$, $x_2$, and $y_2$ in $\Delta$ and
$x'_1$, $y'_1$, $x'_2$, and $y'_2$ in $\Delta'$, then
each three of the following four conditions
implies the remaining one:
\begin{itemize}
\item $(x_1,x'_1)\sim_f(y_1,y'_1)$,
\item $(x_1,x'_2)\sim_f(y_1,y'_2)$,
\item $(x_2,x'_1)\sim_f(y_2,y'_1)$,
\item $(x_2,x'_2)\sim_f(y_2,y'_2)$.
\end{itemize}
A  time one map $f$ of  a flow $(\Phi^t)$ has the eight point matching property because each equivalence asserts that the time to flow between a pair of points on $\Delta$ is equal to the time to flow between a pair of points on $\Delta'$. Figure~\ref{F: relation2} illustrates the  eight point matching property.

\begin{figure}[ht]
\centering
\includegraphics[scale=1.0]{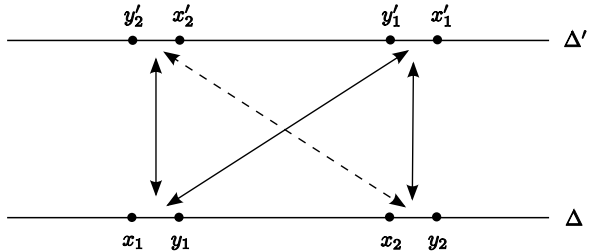}
\caption{The eight point matching property.}
\label{F: relation2}
\end{figure}

Given both the four and eight point matching properties there is a second equivalence relation $\approx_f$, which this time is defined on the set $\Delta\times\Delta$. It is given by the property that $(x_1,y_1)\approx_f(x_2,y_2)$ if and only if there is a pair $(x',y')$ in $\Delta'\times \Delta'$ such that both $(x_1,x')\sim_f (y_1,y')$ and $(x_2,x')\sim_f (y_2,y')$. We prove that $\approx_f$ is an equivalence relation in Section~\ref{S: DeltaDelta}.

We are now in a position to state our main theorem.

\begin{theorem}\label{T: main}
Let $\folR$ be the Reeb foliation and let $f$ be an $\folR$-homeomorphism. Then $f$ is $\folR$-flowable if and only if $f$ has both the four point and eight point matching properties.
Furthermore, if there are two $\folR$-flows $(\Phi^t)$ and $(\Psi^t)$
such that $\Phi^1=\Psi^1=f$, then, when restricted to the nonseparated leaves $\Delta$ and $\Delta'$,  $\Phi^t=\Psi^t$ for each real number $t$.
\end{theorem}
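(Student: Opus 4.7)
The proof splits into necessity, sufficiency, and uniqueness on $\Delta\cup\Delta'$. For necessity, the four point matching property is verified in the introduction; the eight point matching property follows from the observation that, for any $\folR$-flow $(\Phi^t)$ with time-one map $f$, the relation $(x,x')\sim_f(y,y')$ is equivalent to the existence of a real number $s$ with $y=\Phi^s(x)$ and $y'=\Phi^s(x')$. Each of the four $\sim_f$-equivalences in the eight point matching property then becomes an equality of time shifts on $\Delta$ and on $\Delta'$, and any three such equalities imply the fourth by the group law on $(\mathbb{R},+)$.

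For sufficiency, assume $f$ has both matching properties. I would first construct the flow on $\Delta\cup\Delta'$ from the equivalence relations. On the quotient $(\Delta\times\Delta)/{\approx_f}$ I would introduce an abelian group structure via concatenation of pairs, well defined because $\approx_f$ is an equivalence relation (established in Section~\ref{S: DeltaDelta} using the eight point matching property), and show that this group is topologically isomorphic to $(\mathbb{R},+)$ with the class of $(x,f(x))$ mapping to $1$. Fixing a base point $x_0\in\Delta$ then yields a time coordinate $\tau\colon\Delta\to\mathbb{R}$ defined by $\tau(y)=[(x_0,y)]$; this is a homeomorphism satisfying $\tau(f(y))=\tau(y)+1$, and pulling back the standard translation produces a flow $\Phi^t|_\Delta$ with $\Phi^1|_\Delta=f|_\Delta$. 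The analogous construction on $\Delta'$ gives $\Phi^t|_{\Delta'}$, and the relation $\sim_f$, together with the four point matching property, makes the two flows compatible in the sense that $(x,x')\sim_f(\Phi^t(x),\Phi^t(x'))$ for every $t\in\mathbb{R}$.

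It remains to extend the flow from $\Delta\cup\Delta'$ to the three open regions $U_1$, $U_2$, $U_3$. Each carries the trivial foliation, so $f|_{U_i}$ is $\folR|_{U_i}$-flowable, with the flow determined by a choice of cross-section. In $U_1$ and $U_3$ the cross-section is chosen so that the continuous extension of the flow to the single boundary leaf ($\Delta$ for $U_1$, $\Delta'$ for $U_3$) matches what has already been built there, the four point matching property providing the required continuity. In $U_2$, one must match both boundary flows simultaneously, and the eight point matching property is the compatibility condition that makes a single family of leafwise time parameterizations on $U_2$ extend continuously to the chosen flows on both $\Delta$ and $\Delta'$. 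This double extension through $U_2$ is the main technical obstacle I anticipate. Joint continuity of the resulting planar flow on $\mathbb{R}^2$ is then checked directly at points of $\Delta$ and $\Delta'$.

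For the uniqueness statement, the time coordinate $\tau$ on $\Delta$ is built solely from $\approx_f$ (hence from $f$) and a base point; changing the base point shifts $\tau$ by an additive constant that cancels in the formula $\Phi^t(y)=\tau^{-1}(\tau(y)+t)$. Therefore any two $\folR$-flows with time-one map $f$ must have the same restriction to $\Delta$, and the identical argument applies to $\Delta'$.
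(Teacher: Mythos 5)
Your overall architecture matches the paper's (necessity as in the introduction; build flows on $\Delta$ and $\Delta'$ from $\approx_f$; extend to the plane), and the necessity direction is fine. But two steps that you treat as routine or explicitly defer are in fact the substantive content of the proof, and as written they are gaps.

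First, the construction of the flow on $\Delta$. You assert that concatenation makes $(\Delta\times\Delta)/{\approx_f}$ an \emph{abelian} group \emph{topologically isomorphic to} $(\mathbb{R},+)$, justifying only well-definedness. Well-definedness does follow from the transitivity-type property of $\sim_f$, but commutativity does not follow from ``$\approx_f$ is an equivalence relation'', and the isomorphism with $(\mathbb{R},+)$ (rather than with some proper ordered subgroup, or a non-archimedean ordered group) is exactly the divisibility/continuity statement one must prove. This is the entire content of the paper's Section~\ref{S: abstract}: the notion of a flowable pair $(\equiv,f)$, the existence and uniqueness of a compatible square root (Lemma~\ref{L: flowable pair}, which rests on Corollary~\ref{C: sqrt}), the convergence $f_k\to\mathrm{id}$ (Lemma~\ref{L: f_k}), and the dyadic-to-real extension. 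Your route could be completed differently --- the maps $\beta_{x_1x_2}$ form a group acting freely and transitively on $\Delta$ by order-preserving homeomorphisms, so a H\"older-type argument gives commutativity and an order embedding into $\mathbb{R}$, and Dedekind completeness of $\Delta$ gives surjectivity --- but none of that argument appears in your proposal, and it cannot be omitted.

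Second, the extension to the plane. You propose extending separately over $U_1$, $U_2$, $U_3$ and matching the $U_2$ flow to both boundary leaves simultaneously, and you explicitly flag this ``double extension'' as an unresolved obstacle. The paper avoids the double matching altogether: it extends over the open set $O=U_1\cup\Delta\cup U_2$, on which $\folR$ restricts to a \emph{trivial} foliation, by conjugating $f$ and the already-built flow on $\Delta$ to horizontal translations via a single chart $H$; the extension then agrees with the flow on $\Delta$ by construction, and the only thing left to \emph{prove} is joint continuity at $\Delta'$. That continuity proof uses the four point matching property together with Lemma~\ref{L: boundary continuity}, namely $(x,x')\sim_f(\Phi^t(x),\Phi^t(x'))$, whose proof in turn uses the uniqueness part of Proposition~\ref{P: flow} applied on $\Delta'$. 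You do state the conclusion of Lemma~\ref{L: boundary continuity}, but you do not derive it, and you do not supply the argument that closes the $U_2$ gap. Finally, your uniqueness paragraph needs one more link: to conclude that an \emph{arbitrary} $\folR$-flow with time-one map $f$ restricts on $\Delta$ to your $\tau$-flow, you must first show that the restriction of any such flow generates $\approx_f$ (there are uncountably many flows on $\Delta$ alone with time-one map $f|_\Delta$); this is where the planar nature of the flow and the interpretation of $\sim_f$ as equality of flow times enter.
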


We have already proven that $\folR$-flowable maps satisfy the four and eight point matching properties; in Section~\ref{S: abstract}--\ref{S: mainProof} we prove the converse implication. In Section~\ref{S: examples} we provide   an example of an $\folR$-homeomorphism that satisfies the four point matching property, but does not satisfy the eight point matching property, which means that, by Theorem~\ref{T: main}, it is not $\folR$-flowable. We also briefly describe an $\folR$-homeomorphism that does not satisfy the four point matching property.

%%%%%%%%%%%%%%%%%%%%%%%%%%%%%%%%%%%%%%%%%%%%%%%%%%%
\section{Flows on the real line}\label{S: abstract}
%%%%%%%%%%%%%%%%%%%%%%%%%%%%%%%%%%%%%%%%%%%%%%%%%%%

To prove Theorem~\ref{T: main} we define  flows on the nonseparated leaves $\Delta$ and $\Delta'$, and then extend these flows to a single flow on the whole plane. Since $\Delta$ and $\Delta'$ are each homeomorphic to the real line, it is instructive to begin by considering flows on $\mathbb{R}$. Each fixed point free order preserving homeomorphism $f$ of $\mathbb{R}$ embeds in uncountably many different flows. In order to pin down a particular flow whose time one map is $f$, we need more information about the flow. To this end, let $(\Phi^t)$ be a flow on $\mathbb{R}$ with $\Phi^1=f$, and such that $\Phi^t$ has no fixed points for each $t\neq 0$. This flow induces an equivalence relation $\equiv$ on $\mathbb{R}^2$
given by the property that $(x_1,x_2)\equiv(y_1,y_2)$ if and only if there is a real number $t$ such that $y_1=\Phi^t(x_1)$ and $y_2=\Phi^t(x_2)$. In other words, the equivalence classes are the orbits of the flow $\Phi^t \times \Phi^t$ acting on $\mathbb{R}^2$. (These equivalence classes  should not be confused with the Reeb foliation associated with Theorem~\ref{T: main}.) We say that $(\Phi^t)$ \emph{generates}  $\equiv$. Each equivalence class of $\equiv$ is the graph of some order preserving homeomorphism from $\mathbb{R}$ to $\mathbb{R}$. Furthermore, $\equiv$ satisfies several properties, three of which are listed below.
\begin{enumerate}
\item[(i)] If $(x_1,x_2)\equiv(y_1,y_2)$ then $(x_2,x_1)\equiv(y_2,y_1)$.
\item[(ii)] If $(x_1,x_2)\equiv(y_1,y_2)$ and $(x_2,x_3)\equiv(y_2,y_3)$ then $(x_1,x_3)\equiv(y_1,y_3 )$.
\item[(iii)] For all points $x$ and $y$ we have $(x,y)\equiv (f(x),f(y))$ and $(x,f(x))\equiv (y,f(y))$.
\end{enumerate}
Now we wish to show that, conversely, conditions (i), (ii), and (iii) are sufficient for an equivalence relation to be generated by a flow whose time one map is $f$. Suppose then that, as before, $f$ is a fixed point free order preserving homeomorphism of $\mathbb{R}$. We assume the existence of an equivalence relation $\equiv$ on $\mathbb{R}^2$, such that each equivalence class of $\equiv$ is the graph of some order preserving homeomorphism of $\mathbb{R}$, and such that conditions (i), (ii), and (iii) hold. We describe such a pair $(\equiv,f)$ as a \emph{flowable pair}. We use this terminology because of the following proposition, which is the main result of this section.

\begin{proposition}\label{P: real}
To each flowable pair $(\equiv,f)$ there corresponds a unique flow $(\Phi^t)$ on $\mathbb{R}$ that generates $\equiv$ and satisfies $\Phi^1=f$.
\end{proposition}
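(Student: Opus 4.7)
The plan is to parametrize the equivalence classes of $\equiv$ by $\mathbb{R}$, identify them with a group of homeomorphisms of $\mathbb{R}$, and then apply H\"older's theorem to rescale this group to $(\mathbb{R},+)$. Fix a basepoint $x_0\in\mathbb{R}$. Since each class of $\equiv$ is the graph of an order-preserving homeomorphism, it meets the vertical line $\{x_0\}\times\mathbb{R}$ in exactly one point; write $g_c$ for the homeomorphism whose graph is the class of $(x_0,c)$, so $g_c(x_0)=c$, and set $G=\{g_c:c\in\mathbb{R}\}$.

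Conditions (i) and (ii) force $G$ to be closed under inverse and composition: swapping coordinates preserves classes by (i), and (ii) shows that $h\circ g$ again has an equivalence class as its graph. Condition (iii) places $f$ itself in $G$ (its graph is a class) and shows that every element of $G$ commutes with $f$. The diagonal is the graph of $g_{x_0}=\mathrm{id}$, and distinct equivalence classes are disjoint, so every non-identity element of $G$ is fixed-point-free. Two graphs of order-preserving homeomorphisms of $\mathbb{R}$ that do not cross must be pointwise ordered, so $G$ is totally ordered by the pointwise order, which coincides with the usual order on $\mathbb{R}$ under the parametrization $c\leftrightarrow g_c$. Setting $c*c':=g_c(c')$ turns $c\mapsto g_c$ into a group isomorphism from $(\mathbb{R},*)$ onto $G$, with identity $x_0$.

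I next verify that $(\mathbb{R},*)$ is a connected, archimedean, totally ordered topological group. For each $y\in\mathbb{R}$, the map $c\mapsto g_c(y)$ is an order-preserving bijection of $\mathbb{R}$ (the classes foliate the plane), hence a homeomorphism; a sandwich argument combining this with the continuity of each $g_c$ in $y$ delivers joint continuity of $(c,y)\mapsto g_c(y)$, which is the continuity of $*$. The archimedean property follows from fixed-point-freeness: for $a>x_0$ the iterates $a^{*n}=g_a^n(x_0)$ escape to $+\infty$. By H\"older's theorem there is then a unique order-preserving group isomorphism $\tau$ from $(\mathbb{R},*)$ onto a subgroup of $(\mathbb{R},+)$ with $\tau(f(x_0))=1$, and continuity of $\tau$ together with connectedness of $(\mathbb{R},*)$ forces $\tau$ to be onto $(\mathbb{R},+)$.

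Now set $\Phi^t:=g_{\tau^{-1}(t)}$. Then $\Phi^1=f$, $\Phi^{s+t}=\Phi^s\circ\Phi^t$, and joint continuity delivers a flow. Each non-identity $\Phi^s$ is fixed-point-free, so $s\mapsto\Phi^s(a)$ is a homeomorphism of $\mathbb{R}$; combined with the abelianness of $G$ (inherited from $(\mathbb{R},+)$), this implies that the orbit of $(a,b)$ under $\Phi^s\times\Phi^s$ equals the class of $(a,b)$, so $(\Phi^t)$ generates $\equiv$. For uniqueness, any flow $(\Psi^t)$ generating $\equiv$ with $\Psi^1=f$ must satisfy $\Psi^t\in G$, say $\Psi^t=g_{\psi(t)}$; then $\tau\circ\psi$ is a continuous homomorphism $(\mathbb{R},+)\to(\mathbb{R},+)$ sending $1$ to $1$, hence the identity, so $\Psi^t=\Phi^t$. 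The main obstacle I anticipate is establishing the joint continuity of $(c,y)\mapsto g_c(y)$ from only the algebraic conditions (i)--(iii); once that is in hand, H\"older's theorem supplies the parametrization and the rest is routine.
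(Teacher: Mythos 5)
Your proof is correct, but it takes a genuinely different route from the paper's. You read the equivalence classes off along a vertical line to obtain a group $G$ of fixed-point-free, order-preserving homeomorphisms acting freely and transitively on $\mathbb{R}$, and then invoke H\"older's theorem to parametrize $G$ by $(\mathbb{R},+)$. The paper instead builds the flow from below: it extracts the unique square root $s$ of $f$ compatible with $\equiv$ (as the fixed point of the order-reversing map $u\mapsto\phi(u,f(x),x)$), iterates to define $\Phi^t$ for dyadic $t$, shows $f_k\to\mathrm{id}$ and that $t\mapsto\Phi^t$ is uniformly continuous on bounded sets, and completes by density of the dyadics. Both arguments rest on the same continuity input --- joint continuity of a map that is a monotone homeomorphism in each variable separately (the paper's Corollary~\ref{C: cts}, cited to Kruse--Deely), which is exactly your ``sandwich argument'' for $(c,y)\mapsto g_c(y)$. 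Your route buys brevity and conceptual clarity: abelianness of $G$ comes for free from H\"older, so both ``$(\Phi^t)$ generates $\equiv$'' and uniqueness become one-line verifications; the paper's route buys self-containedness and an explicit square-root construction. Two points you should make explicit: first, the order-isomorphism $\tau$ must be shown continuous before connectedness yields surjectivity onto $(\mathbb{R},+)$ --- this holds because a jump of the monotone injection $\tau$ would leave an interval disjoint from its image, yet that image is a subgroup of $\mathbb{R}$, hence dense or cyclic, and cyclic is impossible since $G$ is uncountable; second, in the uniqueness step the claim $\Psi^t\in G$ deserves a sentence: the $\Psi^s\times\Psi^s$-orbit of $(x_0,\Psi^t(x_0))$ is by hypothesis an equivalence class, i.e.\ the graph of an element of $G$ over all of $\mathbb{R}$, and it is contained in the graph of $\Psi^t$, forcing equality. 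Neither is a real gap; both are routine.
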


In order to prove Proposition~\ref{P: real}, we begin by considering only an equivalence relation $\equiv$ such that each equivalence  class is the graph of an order preserving homeomorphism of $\mathbb{R}$. That is, we ignore conditions (i), (ii), and (iii), and, in particular, for the moment we ignore the map $f$.
Let us consider the partition of $\mathbb{R}^2$ induced by $\equiv$. Clearly every vertical line (and also every horizontal line) intersects each equivalence class exactly once. From this and the Intermediate Value Theorem, it is easy to prove that this partition  is a  trivial foliation. We denote it by $\folF$.
We can define a map $\phi$ from $\mathbb{R}^3$ to $\mathbb{R}$ by the condition that $\phi(x,x',y)=y'$ if and only if $y'$ is the unique element of $\mathbb{R}$ such that $(x,x')\equiv (y,y')$. Then  the maps $u\mapsto\phi(x,x',u)$, $u\mapsto\phi(x,u,y)$ and $u\mapsto\phi(u,x',y)$ have the following interpretation. The graph of the first one is the equivalence class of $(x,x')$, thus by hypothesis it is an order preserving homeomorphism. If we identify in a natural way the vertical lines $\{x \} \times \mathbb{R}$ and $\{y \} \times \mathbb{R}$ with $\mathbb{R}$, then the second one is the holonomy of the foliation $\folF$; that is, the map obtained by following the leaves of $\folF$ from one line to the other one. It is clear (and well known) that such a map is also an order preserving homeomorphism. Similarly, the third map is identified with the holonomy from a horizontal line to a vertical line, and thus is an order reversing homeomorphism. These properties are summed up in the following lemma.

\begin{lemma}\label{L: phi}
Given points $x$, $x'$, and $y$ in $\mathbb{R}$, the maps $u\mapsto\phi(x,x',u)$ and $u\mapsto\phi(x,u,y)$ are order preserving homeomorphisms of $\mathbb{R}$, and the map $u\mapsto\phi(u,x',y)$ is an order reversing homeomorphism of $\mathbb{R}$.
\end{lemma}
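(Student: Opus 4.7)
The plan is to combine two facts: first, the equivalence class of any point is, by hypothesis, the graph of an order preserving homeomorphism of $\mathbb{R}$; second, distinct equivalence classes are disjoint, so by the intermediate value theorem two leaves that start ordered one way and end ordered the other way at two abscissae must meet, and hence coincide.

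For the first map, $u\mapsto\phi(x,x',u)$, there is essentially nothing to prove: by hypothesis the equivalence class of $(x,x')$ is the graph of some order preserving homeomorphism $h$, and the definition of $\phi$ gives $\phi(x,x',u)=h(u)$ for every $u$.

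For the second map, $u\mapsto \phi(x,u,y)$, I would establish strict monotonicity and bijectivity, then invoke the elementary fact that a monotonic bijection of $\mathbb{R}$ to itself is automatically continuous, hence a homeomorphism. Injectivity is immediate: if $\phi(x,u_1,y)=\phi(x,u_2,y)=v$, then both $(x,u_1)$ and $(x,u_2)$ lie in the equivalence class of $(y,v)$, which, being a graph, meets the vertical line $\{x\}\times\mathbb{R}$ in only one point. Surjectivity is equally easy: given $v$, the equivalence class of $(y,v)$, being the graph of an order preserving homeomorphism $h$, meets $\{x\}\times\mathbb{R}$ at $(x,h(x))$, so $\phi(x,h(x),y)=v$. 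For monotonicity, assume $u_1<u_2$ and suppose for contradiction that $\phi(x,u_1,y)\geq \phi(x,u_2,y)$. The two leaves through $(x,u_1)$ and $(x,u_2)$ are graphs of continuous functions whose difference is strictly negative at $x$ but non-negative at $y$, so the intermediate value theorem produces a common point, contradicting disjointness of distinct equivalence classes.

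For the third map, $u\mapsto \phi(u,x',y)$, the argument is the same up to one careful sign observation. If $u_1<u_2$, the leaf through $(u_2,x')$ is the graph of an order preserving homeomorphism, so at the abscissa $u_1<u_2$ it attains a value strictly less than $x'$. Thus at $x$-coordinate $u_1$, the leaf through $(u_1,x')$ lies strictly above the leaf through $(u_2,x')$, and the same non-crossing argument forces this ordering to persist at $x=y$, so $\phi(u_1,x',y)>\phi(u_2,x',y)$. The only genuinely geometric step in the whole lemma is this non-crossing property; everything else is formal from the definitions.
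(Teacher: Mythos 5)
Your proof is correct, and it rests on the same geometric fact the paper uses: the equivalence classes are disjoint graphs of increasing homeomorphisms, so leaves cannot cross. The paper does not write out a proof at all --- it identifies the second and third maps with holonomy maps of the trivial foliation $\folF$ (between two vertical transversals, respectively from a horizontal to a vertical transversal) and declares the conclusion ``clear and well known'' --- whereas you supply the elementary verification (injectivity and surjectivity from the graph property, monotonicity from the intermediate value theorem, continuity from monotone bijectivity) that this well-known fact summarises.
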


We give two corollaries of Lemma~\ref{L: phi}.

\begin{corollary}\label{C: cts}
The map $\phi:\mathbb{R}^3\to\mathbb{R}$ is continuous.
\end{corollary}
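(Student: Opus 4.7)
My plan is to fix an arbitrary point $(a,b,c)\in\mathbb{R}^3$, set $p=\phi(a,b,c)$, and exploit the separate monotonicity furnished by Lemma~\ref{L: phi}. Since $u\mapsto\phi(u,b,c)$ is order reversing while $u\mapsto\phi(a,u,c)$ and $u\mapsto\phi(a,b,u)$ are order preserving, for any $\delta>0$ and any $(x,x',y)$ in the closed box $[a-\delta,a+\delta]\times[b-\delta,b+\delta]\times[c-\delta,c+\delta]$ one gets the sandwich bound
\[
\phi(a+\delta, b-\delta, c-\delta)\leq \phi(x,x',y)\leq \phi(a-\delta, b+\delta, c+\delta).
\]
Consequently, it suffices to show that both the upper and lower bounds converge to $p$ as $\delta\to 0^+$, for then continuity of $\phi$ at $(a,b,c)$ follows immediately.

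To verify this convergence I would apply a three-step triangle estimate, changing one coordinate at a time. For the upper bound, write
\[
\phi(a-\delta,b+\delta,c+\delta)-p = A_1(\delta)+A_2(\delta)+A_3(\delta),
\]
where $A_1=\phi(a-\delta,b+\delta,c+\delta)-\phi(a,b+\delta,c+\delta)$, $A_2=\phi(a,b+\delta,c+\delta)-\phi(a,b,c+\delta)$, and $A_3=\phi(a,b,c+\delta)-\phi(a,b,c)$. Each difference moves only one coordinate while fixing the other two, so each is controlled by the continuity of a partial map of $\phi$ of the type that Lemma~\ref{L: phi} identifies as a homeomorphism of $\mathbb{R}$. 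Sending $\delta\to 0^+$ kills each of $A_1$, $A_2$, $A_3$, and the analogous estimate at the corner $(a+\delta,b-\delta,c-\delta)$ handles the lower bound.

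There is no serious obstacle here: the argument is purely structural, and relies on the general principle that separate monotonicity plus separate continuity yields joint continuity via the monotone sandwich. The only bookkeeping subtlety is to ensure that the coordinate order in the triangle decomposition is chosen so that every intermediate point remains in the domain (which, for $\phi\colon\mathbb{R}^3\to\mathbb{R}$, is automatic), and that at each step the relevant partial map is exactly one of the three addressed by Lemma~\ref{L: phi}.
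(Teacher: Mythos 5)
Your overall strategy---the monotone sandwich
\[
\phi(a+\delta,b-\delta,c-\delta)\leq\phi(x,x',y)\leq\phi(a-\delta,b+\delta,c+\delta)
\]
followed by a proof that the two corner values tend to $p$---is the right self-contained route; the paper itself simply cites Kruse and Deely for exactly this fact, so you are in effect reproving the quoted lemma, and the sandwich step is correct. The gap is in the corner-limit step. The term $A_3=\phi(a,b,c+\delta)-\phi(a,b,c)$ does tend to $0$, because it compares two values of the single fixed partial map $u\mapsto\phi(a,b,u)$. But $A_1$ and $A_2$ are not of this form: $A_2$, for instance, compares two values of the partial map $u\mapsto\phi(a,u,c+\delta)$, whose base point $c+\delta$ moves with $\delta$. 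Continuity of each member of a $\delta$-indexed family of maps does not make $g_\delta(b+\delta)-g_\delta(b)$ small without some uniformity, and this is precisely where ``separately continuous implies jointly continuous'' fails in general: for $F(x,y)=xy/(x^2+y^2)$ with $F(0,0)=0$, the same decomposition gives $F(\delta,\delta)-F(0,0)=\bigl(F(\delta,\delta)-F(0,\delta)\bigr)+\bigl(F(0,\delta)-F(0,0)\bigr)$, each difference ``moves only one coordinate while fixing the other'', yet the first difference equals $1/2$ for every $\delta>0$. So the justification you give for $A_1,A_2\to 0$ would prove too much and cannot stand as written; the monotonicity must be used a second time.

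The repair is to decouple the three perturbations and choose them nestedly. Given $\varepsilon>0$, first choose $\delta_3>0$ with $\phi(a,b,c+\delta_3)<p+\varepsilon/3$ (continuity of $u\mapsto\phi(a,b,u)$); then, with $\delta_3$ fixed, choose $\delta_2\in\,]0,\delta_3]$ with $\phi(a,b+\delta_2,c+\delta_3)<\phi(a,b,c+\delta_3)+\varepsilon/3$ (continuity of $u\mapsto\phi(a,u,c+\delta_3)$, now a genuinely fixed partial map); finally choose $\delta_1\in\,]0,\delta_2]$ with $\phi(a-\delta_1,b+\delta_2,c+\delta_3)<\phi(a,b+\delta_2,c+\delta_3)+\varepsilon/3$. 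For every $\delta\leq\delta_1$ the separate monotonicity gives
\[
p\leq\phi(a-\delta,b+\delta,c+\delta)\leq\phi(a-\delta_1,b+\delta_2,c+\delta_3)<p+\varepsilon,
\]
which is the desired corner limit; the other corner is symmetric. With that change your argument is complete and furnishes an elementary proof of the result that the paper merely cites.
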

\begin{proof}
It is well known (see, for example, \cite{KrDe1969}) that a map from $\mathbb{R}^n$ to $\mathbb{R}$ that is a homeomorphism in each argument is continuous.
\end{proof}

\begin{corollary}\label{C: sqrt}
Given two points  $a$ and $b$ in $\mathbb{R}$ with $a<b$ there is a unique real number $u$ such that $(a,u)\equiv (u,b)$.
\end{corollary}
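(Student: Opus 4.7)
The plan is to reformulate the existence condition as a single-variable equation and solve it by monotonicity. Specifically, define $g \colon \mathbb{R} \to \mathbb{R}$ by $g(u) = \phi(a,u,u)$; then $(a,u) \equiv (u,b)$ is precisely $g(u) = b$, so it suffices to show that $g$ is a continuous strictly increasing bijection of $\mathbb{R}$.

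Continuity of $g$ follows at once from Corollary~\ref{C: cts}, since $g$ is $\phi$ precomposed with the diagonal embedding $u \mapsto (a,u,u)$. For strict monotonicity, given $u_1 < u_2$, I would chain two of the partial monotonicities supplied by Lemma~\ref{L: phi}:
\[
g(u_1) = \phi(a,u_1,u_1) < \phi(a,u_2,u_1) < \phi(a,u_2,u_2) = g(u_2),
\]
where the first inequality uses that $v \mapsto \phi(a,v,u_1)$ is order preserving and the second that $w \mapsto \phi(a,u_2,w)$ is order preserving. For surjectivity I would compare $g$ to a fixed holonomy: pick any $y_0 \in \mathbb{R}$; by Lemma~\ref{L: phi} the map $v \mapsto \phi(a,v,y_0)$ is an order preserving homeomorphism of $\mathbb{R}$ and hence tends to $+\infty$ as $v \to +\infty$, while for $u > y_0$ the third-argument monotonicity forces $g(u) > \phi(a,u,y_0)$. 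Thus $g(u) \to +\infty$ as $u \to +\infty$, and the behaviour at $-\infty$ is symmetric. The intermediate value theorem then supplies existence, and strict increase supplies uniqueness.

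There is no genuine obstacle here; the argument is essentially bookkeeping with the monotonicities catalogued in Lemma~\ref{L: phi}, the key observation being that $g$ inherits strict increase simultaneously from the second- and third-coordinate dependences of $\phi$. I remark that the hypothesis $a < b$ is unused in the proof: it is presumably stated because in the intended application the equivalence class of $(a,u)$ is the graph of an order preserving homeomorphism sending $a \mapsto u$ and $u \mapsto b$, which automatically forces $a < u < b$.
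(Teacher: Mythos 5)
Your argument is correct, but it is genuinely different from the paper's. The paper observes that $(a,u)\equiv(u,b)$ says exactly that $u$ is a fixed point of the map $u\mapsto\phi(u,b,a)$, which by Lemma~\ref{L: phi} is an order \emph{reversing} homeomorphism of $\mathbb{R}$ and therefore has a unique fixed point; that is the entire proof. You instead work with $g(u)=\phi(a,u,u)$ and solve $g(u)=b$ by showing $g$ is a continuous, strictly increasing surjection, chaining the two order \emph{preserving} monotonicities of Lemma~\ref{L: phi} and invoking Corollary~\ref{C: cts} plus the intermediate value theorem. Both routes are sound. The paper's version is shorter and needs only Lemma~\ref{L: phi} (no joint continuity), and the fixed-point formulation is reused later: in Lemma~\ref{L: flowable pair} the square-root $s(x)$ is defined as the fixed point of $\gamma_x:u\mapsto\phi(u,f(x),x)$, and continuity of $s$ is extracted from continuity of the fixed-point map on $\textnormal{Homeo}^-(\mathbb{R})$. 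Your version makes the monotone structure of the problem more explicit and, as you note, shows that the hypothesis $a<b$ is not needed for existence and uniqueness (it only governs where $u$ lands, namely $a<u<b$); the same is true of the paper's proof. One tiny point of bookkeeping: your surjectivity argument at $-\infty$ should be spelled out as $g(u)=\phi(a,u,u)<\phi(a,u,y_0)\to-\infty$ for $u<y_0$, the mirror of what you wrote; this is what you intended by "symmetric" and causes no difficulty.
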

\begin{proof}
The point  $u$ is the unique fixed point of the order reversing map $u\mapsto \phi(u,b,a)$.
\end{proof}

Now we suppose that $(\equiv,f)$ is a flowable pair. Notice that if $(a,b)\equiv (c,d)$ then, since $(b,f(b))\equiv (d,f(d))$ by condition (iii),  we have $(a,f(b))\equiv (c,f(d))$ by condition (ii).

A \emph{square-root of $f$} is a homeomorphism $s$ of $\mathbb{R}$ such that $s^2 = f$.

\begin{lemma}\label{L: flowable pair}
If $(\equiv,f)$ is a flowable pair then $f$ has a unique square-root $s$ such that $(\equiv,s)$ is also a flowable pair.
\end{lemma}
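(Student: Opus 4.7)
The natural candidate for $s$ is provided by Corollary~\ref{C: sqrt}: assuming $f(x) > x$ for all $x$ (the opposite case is symmetric via condition (i)), I would define $s(x)$ to be the unique $u$ such that $(x, u) \equiv (u, f(x))$. The identity $s^2 = f$ follows quickly, since condition (iii) yields $(x, s(x)) \equiv (f(x), f(s(x)))$, and combining this with the defining relation $(x, s(x)) \equiv (s(x), f(x))$ gives $(s(x), f(x)) \equiv (f(x), f(s(x)))$; thus $u = f(x)$ solves the defining equation $(s(x), u) \equiv (u, f(s(x)))$ for $s(s(x))$, forcing $s(s(x)) = f(x)$. Since $s(x) = x$ would imply $f(x) = x$, the map $s$ has no fixed points; it is continuous by an implicit-function argument combining Corollary~\ref{C: cts} with the strict monotonicity supplied by Lemma~\ref{L: phi}, injective because $f$ is, and therefore a strictly increasing homeomorphism of $\mathbb{R}$.

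Next I would verify the graph condition $(x, s(x)) \equiv (y, s(y))$ for all $x, y$. Fix $x_0$ and let $C$ be the equivalence class of $(x_0, s(x_0))$, which is the graph of some order preserving homeomorphism $\alpha$; by definition of $s$ the pair $(s(x_0), f(x_0))$ lies in $C$, so $\alpha(s(x_0)) = f(x_0)$. For any $y$, both $(y, \alpha(y))$ and $(\alpha(y), \alpha^2(y))$ lie in $C$, so condition (ii) applied to
\[
(x_0, s(x_0)) \equiv (y, \alpha(y)) \quad \text{and} \quad (s(x_0), f(x_0)) \equiv (\alpha(y), \alpha^2(y))
\]
yields $(x_0, f(x_0)) \equiv (y, \alpha^2(y))$. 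Since $(x_0, f(x_0)) \equiv (y, f(y))$ by condition (iii), two equivalent pairs sharing a first coordinate must coincide, so $\alpha^2(y) = f(y)$; then $(y, \alpha(y)) \equiv (\alpha(y), \alpha^2(y)) = (\alpha(y), f(y))$ forces $\alpha(y) = s(y)$ by the uniqueness in Corollary~\ref{C: sqrt}. Thus $C$ is the graph of $s$.

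The remaining flowability condition $(x, y) \equiv (s(x), s(y))$ I would handle by a conjugation argument: conditions (i) and (ii) make the set of leaves of the foliation associated with $\equiv$ closed under composition and inversion, so for any leaf $\mu$ the coordinate-wise map $(a, b) \mapsto (\mu^{-1}(a), \mu^{-1}(b))$ carries each leaf $\nu$ to the graph of $\mu^{-1} \circ \nu \circ \mu$ and therefore preserves $\equiv$. Applying this to the defining relation for $s$ at $\mu(x)$ and using $f \circ \mu = \mu \circ f$ (from (iii)), one sees that $s_\mu := \mu^{-1} \circ s \circ \mu$ satisfies $(x, s_\mu(x)) \equiv (s_\mu(x), f(x))$, whence $s_\mu = s$ by Corollary~\ref{C: sqrt}; thus every leaf commutes with $s$, and for the leaf $\mu$ through a given $(x, y)$ we get $\mu(s(x)) = s(\mu(x)) = s(y)$, so $(s(x), s(y)) \equiv (x, y)$. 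Uniqueness is then immediate: if $\tilde s$ is another square-root making $(\equiv, \tilde s)$ a flowable pair, condition (iii) for $\tilde s$ applied at $x$ and $\tilde s(x)$ yields $(x, \tilde s(x)) \equiv (\tilde s(x), f(x))$, and Corollary~\ref{C: sqrt} forces $\tilde s = s$. The main obstacle I anticipate is the (ii)-based computation in the second paragraph that identifies $\alpha^2$ with $f$ on the leaf through $(x_0, s(x_0))$; once that step is secured, the conjugation trick and the uniqueness argument are routine.
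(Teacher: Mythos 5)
Your proof is correct. The first two thirds track the paper's own argument closely: $s(x)$ is defined as the fixed point supplied by Corollary~\ref{C: sqrt}, the identity $s^2=f$ is obtained by checking that $u=f(x)$ solves the defining relation at $s(x)$ (the paper gets the needed equivalence $(s(x),f(x))\equiv(f(x),fs(x))$ from condition (i) plus the observation preceding the lemma, where you use the first half of condition (iii) directly --- an equivalent shortcut), and your identification of the equivalence class of $(x_0,s(x_0))$ with the graph of $s$ is the paper's $u$--$v$ computation in different notation. Where you genuinely diverge is the final and hardest step, $(x,y)\equiv(s(x),s(y))$. The paper introduces $\delta_y(u)=\phi(s(y),s(u),y)$, shows $\delta_y^2$ is the identity, and concludes $\delta_y=\mathrm{id}$ because an order preserving involution of $\mathbb{R}$ is trivial. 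You instead note that conditions (i) and (ii) make the leaf homeomorphisms closed under composition and inversion, that condition (iii) forces $f$ to commute with every leaf $\mu$, and that conjugating the defining relation of $s$ at $\mu(x)$ by the coordinate-wise action of $\mu^{-1}$, together with the uniqueness in Corollary~\ref{C: sqrt}, gives $\mu^{-1}s\mu=s$; commutation of $s$ with the leaf through $(x,y)$ then yields the relation. This is a sound and more algebraic route: it makes explicit the group structure on the leaves and the fact that $s$ is the unique ``half'' of $f$ in the centralizer of that group, at the cost of the small preliminary lemma that the coordinate-wise action of a leaf preserves $\equiv$ (which you justify correctly). The places where you are terse --- that a continuous, injective, fixed-point-free self-map of $\mathbb{R}$ whose square is the surjection $f$ is an increasing homeomorphism, and the continuity of $x\mapsto s(x)$ --- are exactly the points the paper also treats briefly, and both fill in the same way.
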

\begin{proof}
If $s$ is a square-root of $f$ and $(\equiv,s)$ is a flowable pair, then
\[
(x,s(x))\equiv (s(x),s^2(x))=(s(x),f(x)),
\]
 therefore $s$ is uniquely defined, by Corollary~\ref{C: sqrt}. We must show that such a map $s$ exists. Using Corollary~\ref{C: sqrt} we define $s(x)$ to be the unique real number such that $(x,s(x))\equiv (s(x),f(x))$. That is, $s(x)$ is the unique fixed point of the order reversing map $\gamma_x:u\mapsto \phi(u,f(x),x)$. To see that $s$ is continuous, let $\textnormal{Homeo}^-(\mathbb{R})$ denote the topological space of order reversing homeomorphisms of $\mathbb{R}$ equipped with the compact-open topology. Then $s$ is obtained by first applying the continuous map  $x\mapsto \gamma_x$ from $\mathbb{R}$ to $\textnormal{Homeo}^-(\mathbb{R})$ , and then applying the continuous map from  $\textnormal{Homeo}^-(\mathbb{R})$ to $\mathbb{R}$ which takes an order reversing homeomorphism to its fixed point.

Next we show that $s^2=f$. It follows from condition (i) that  $(s(x),x)\equiv (f(x),s(x))$, and by the observation made just before Lemma~\ref{L: flowable pair}, we have $(s(x),f(x))\equiv(f(x),fs(x))$. Hence, by considering the definition of $s(s(x))$ and the uniqueness part of Corollary~\ref{C: sqrt}, we see that
 $f(x)=s^2(x)$. Thus $s$ is an order preserving homeomorphism of the real line whose square is $f$.

It remains to verify condition (iii) for $s$. First we show that $(x,s(x))\equiv(y,s(y))$. Define $u$ to be the unique point such that $(x,s(x))\equiv (y,u)$, and define $v$ to be the unique point such that $(s(x),f(x))\equiv (u,v)$. Then, by condition (ii), we have $(x,f(x))\equiv (y,v)$; hence $v=f(y)$. Therefore
\[
(y,u)\equiv (x,s(x))\equiv (s(x),f(x))\equiv (u,f(y)).
\]
 By Corollary~\ref{C: sqrt}, this means that $u=s(y)$, as required.

Last we show that $(s(x),s(y))\equiv(x,y)$. Consider the order preserving homeomorphism $\delta_y:\mathbb{R}\to\mathbb{R}$ given by $\delta_y(u)=\phi(s(y),s(u),y)$. Let $v=\delta_y(x)$ and let $w=\delta_y(v)$. Then $(s(y),s(x))\equiv (y,v)$ and $(s(y),s(v))\equiv(y,w)$. Using property (i)  and bearing in mind the comment stated just before Lemma~\ref{L: flowable pair}, we can apply $s$ to each argument of each side of the first of the two equivalence relations to see that $(f(y),f(x))\equiv (s(y),s(v))$. By transitivity of $\equiv$, this means that $(f(y),f(x))\equiv (y,w)$. Hence, by condition (iii) for $f$, we have $(y,w)\equiv (y,x)$. Therefore  $w=x$. This means that $\delta_y^2$ is the identity map. But $\delta_y$ preserves order, therefore $\delta_y$ is the identity map. Therefore $v=x$, and $(s(x),s(y))\equiv (x,y)$.
\end{proof}

By applying Lemma~\ref{L: flowable pair} repeatedly we obtain a sequence $(f_k)$ of order preserving homeomorphisms such that $f_0=f$, $f_k=f_{k+1}^2$, and $(\equiv, f_k)$ is a flowable pair for each positive integer $k$. We can define a partial order $<$ on the group $\textup{Homeo}^+(\mathbb{R})$ of order preserving homeomorphisms of the real line, given by $g<h$ if and only if $g(x)<h(x)$ for each real number $x$. With respect to this partial order, the sequence $(f_k)$ is strictly monotonic. Moreover, we have the following result.

\begin{lemma}\label{L: f_k}
The sequence $(f_{k})$ converges uniformly on compact subsets of $\mathbb{R}$ to the identity.
\end{lemma}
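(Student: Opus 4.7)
\emph{Proof plan.} The plan is to define $g(x) := \lim_{k\to\infty} f_k(x)$, show that $g$ is a continuous self-map of $\mathbb{R}$, and then pass to the limit in the relation $f_k = f_{k+1}^2$ to obtain $g\circ g = g$; since $g$ will turn out to be a homeomorphism, this forces $g=\mathrm{id}$. Assume without loss of generality that $f(x)>x$ for every $x\in\mathbb{R}$. The construction of the square-root in Lemma~\ref{L: flowable pair} shows that $(x, f_{k+1}(x)) \equiv (f_{k+1}(x), f_k(x))$, and since every equivalence class is the graph of an order preserving homeomorphism, an elementary order argument yields $x < f_{k+1}(x) < f_k(x)$ for every $k$ and every $x$. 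Hence $(f_k(x))$ is strictly decreasing and bounded below by $x$, so it converges to some $g(x)\geq x$.

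The key step is to show that $g$ is continuous. Fix $x_0\in\mathbb{R}$. Condition~(iii) applied to the flowable pair $(\equiv, f_k)$ gives $(x_0, f_k(x_0))\equiv (x, f_k(x))$ for every $x\in\mathbb{R}$, so by the defining property of $\phi$ we have
\[
f_k(x) = \phi(x_0, f_k(x_0), x).
\]
Corollary~\ref{C: cts} then allows me to take the limit $k\to\infty$ on both sides, yielding $g(x) = \phi(x_0, g(x_0), x)$. In particular $g$ is continuous, its graph is precisely the equivalence class of $(x_0, g(x_0))$, and Lemma~\ref{L: phi} shows that $g$ is an order preserving homeomorphism of $\mathbb{R}$.

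With $g$ continuous and $f_k\to g$ pointwise and monotonically, Dini's theorem delivers uniform convergence on every compact subset of $\mathbb{R}$. I can now take the limit in $f_k(x) = f_{k+1}(f_{k+1}(x))$: the left-hand side tends to $g(x)$; on the right, $f_{k+1}(x)\to g(x)$, and the uniform convergence of $f_{k+1}$ to $g$ on a compact neighbourhood of $g(x)$ combined with the continuity of $g$ gives $f_{k+1}(f_{k+1}(x))\to g(g(x))$. Thus $g\circ g = g$, and since $g$ is a homeomorphism we conclude $g=\mathrm{id}$, completing the proof. The main obstacle I anticipate is establishing the continuity of $g$, which is handled by recognising each $f_k$ as a slice of the globally continuous map $\phi$ and then invoking Corollary~\ref{C: cts}.
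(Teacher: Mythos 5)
Your proof is correct. It shares the paper's starting point (monotone decreasing, bounded below, hence pointwise convergent; then pass to the limit through the continuous map $\phi$ of Corollary~\ref{C: cts}), but the two halves of the argument are carried out differently. To identify the limit, the paper takes the limit in the relation $(x,f_k(x))\equiv(f_k(x),f_{k-1}(x))$ coming from Corollary~\ref{C: sqrt}, obtains $(x,y)\equiv(y,y)$, and concludes $y=x$ at once because an equivalence class is the graph of an injection; you instead use condition~(iii) to realise each $f_k$ as a slice of $\phi$, deduce that the pointwise limit $g$ is an order preserving homeomorphism, and then extract $g=\mathrm{id}$ from the idempotence $g\circ g=g$ obtained by passing to the limit in $f_k=f_{k+1}^2$. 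To upgrade pointwise to locally uniform convergence, the paper simply invokes the known fact that the two topologies coincide on $\textnormal{Homeo}^+(\mathbb{R})$, whereas you supply the missing ingredient (continuity of the limit) explicitly and then apply Dini; your route is more self-contained, at the cost of a slightly longer argument, and it is a nice observation that the graph of the limit function is itself an equivalence class of $\equiv$. The only step you wave at, the inequality $x<f_{k+1}(x)<f_k(x)$, is equally unproved in the paper and does follow by the order argument you sketch (first $f_{k+1}(x)>x$ from $f_{k+1}^2=f_k$ and monotonicity, then apply the order preserving map whose graph is the class of $(x,f_{k+1}(x))$).
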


It is convenient to assume henceforth that $f(x)>x$ for each real number $x$. The alternative is that $f(x)<x$ for each real number $x$, and in this case we can revert to the former situation by working with $f^{-1}$, rather than $f$. Note that this implies that for every positive integer $k$ and every real number $x$, $f_{k}(x) > x$.

\begin{proof}
The topology of locally uniform convergence coincides with the topology of pointwise convergence on $\textnormal{Homeo}^+(\mathbb{R})$, thus it suffices to prove pointwise convergence of $(f_k)$. Choose a real number $x$. The sequence $(f_k(x))$ is decreasing and bounded below by $x$; let $y$ be the limit of the sequence. By Corollary~\ref{C: sqrt} we have that $(x,f_k(x))\equiv (f_k(x),f_{k-1}(x))$ for each positive integer $k$. By Corollary~\ref{C: cts} we can take limits to obtain $(x,y)\equiv (y,y)$. Therefore $y=x$.
\end{proof}

Given a dyadic rational number $t$ we define
\[
\Phi^{t} = f_{k}^{\varepsilon_{k}}\circ  \cdots \circ f_{0}^{\varepsilon_{0}}
\]
where
\[
t = \varepsilon_{0} + \frac{\varepsilon_1}{2}+ \cdots + \frac{\varepsilon_{k}}{2^k}, \ \  \varepsilon_0\in\mathbb{Z}, \  \ \varepsilon_{i} \in \{0,1\}.
\]
Thus
\[
\Phi^t = f_{k}^{\varepsilon_{k} + 2\varepsilon_{k-1}+ \cdots +2^k\varepsilon_{0}}.
\]

In future we omit the symbol $\circ$ whenever we compose two functions.
The additive property $\Phi^{s+t}=\Phi^s\Phi^t$ holds whenever $s$ and $t$ are dyadic numbers. The map $t\mapsto \Phi^t$ preserves order, because if $p/2^k$ and $q/2^k$ are dyadic numbers such that $p<q$ then for each real number $x$ we have
\[
\Phi^{q/2^k}(x) = f_k^{q-p}\Phi^{p/2^k}(x) > \Phi^{p/2^k}(x).
\]

For the next lemma, recall that the set $\textup{Homeo}^+(\mathbb{R})$ can be equipped with the topology of uniform convergence on compact sets. This  topology is generated by the family of pseudo-distances $d_{K}$ given by
\[
d_{K}(g,h) = \textnormal{sup}_{x \in [-K,K]} \left( \mid g(x)-h(x)\mid + \mid g^{-1}(x)-h^{-1}(x)\mid \right)
\]
where $[-K,K]$ is a compact subset of $\mathbb{R}$.
These pseudo-distances give rise to a distance for which $\textup{Homeo}^+(\mathbb{R})$ becomes a complete metric space. The set of dyadic numbers is  equipped with the usual distance.
\begin{lemma}
The map $t \mapsto \Phi^{t}$ from the group of dyadic numbers to $\textup{Homeo}^+(\mathbb{R})$ is uniformly continuous on bounded sets.
\end{lemma}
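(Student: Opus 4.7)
The strategy is to reduce uniform continuity on a bounded set of dyadics to the single statement that $\Phi^r$ tends to the identity in $\textup{Homeo}^+(\mathbb{R})$ as the dyadic $r$ tends to $0$. The two ingredients are the group property $\Phi^{s+t}=\Phi^s\Phi^t$ and the fact that $t\mapsto\Phi^t$ preserves the pointwise order, both of which have been established on the dyadics immediately above. To control $d_{K_0}(\Phi^s,\Phi^t)$ it suffices, by the definition of the pseudo-distance, to bound $|\Phi^t(x)-\Phi^s(x)|$ and $|\Phi^{-s}(x)-\Phi^{-t}(x)|$ for $x\in[-K_0,K_0]$; I shall handle the first bound and observe that the second is identical with $-s,-t$ in place of $s,t$.

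Fix $N>0$ and a compact interval $[-K_0,K_0]$. For dyadic $s,t\in[-N,N]$ with $s\leq t$, the group property gives
\[
\Phi^t(x)-\Phi^s(x)=\Phi^{t-s}(\Phi^s(x))-\Phi^s(x).
\]
Because $t\mapsto\Phi^t$ is order preserving on the dyadics and $\pm N$ are themselves dyadic, $\Phi^s(x)$ lies in the compact set $K_1:=[\Phi^{-N}(-K_0),\Phi^N(K_0)]$ whenever $x\in[-K_0,K_0]$ and $s\in[-N,N]$, and crucially $K_1$ is independent of $s$. The same monotonicity, applied to any dyadic $r$ with $0\leq r\leq 1/2^k$, yields the pointwise sandwich $y\leq\Phi^r(y)\leq f_k(y)$ for every $y\in\mathbb{R}$. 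Combining these, whenever $|s-t|\leq 1/2^k$ we obtain
\[
0\leq\Phi^t(x)-\Phi^s(x)\leq\sup_{y\in K_1}|f_k(y)-y|.
\]

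Given $\epsilon>0$, Lemma~\ref{L: f_k} lets us choose $k$ so large that this supremum is at most $\epsilon/2$; the same $k$ then controls $|\Phi^{-s}(x)-\Phi^{-t}(x)|$ by $\epsilon/2$ by running the same argument with $-s,-t\in[-N,N]$ in place of $s,t$. Consequently $d_{K_0}(\Phi^s,\Phi^t)<\epsilon$ for all dyadic $s,t\in[-N,N]$ with $|s-t|\leq 1/2^k$, which is precisely the desired uniform continuity on bounded sets of dyadics. There is no genuine obstacle here: the proof is a mechanical synthesis of order preservation and Lemma~\ref{L: f_k}, and the only delicate point is to confine every orbit $\Phi^s([-K_0,K_0])$, for $s$ in the bounded range, to a fixed compact set~$K_1$, which order preservation supplies at once.
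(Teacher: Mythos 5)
Your proof is correct and follows essentially the same route as the paper's: decompose $\Phi^t(x)-\Phi^s(x)$ as $\Phi^{t-s}(\Phi^s(x))-\Phi^s(x)$, use order preservation to confine $\Phi^s([-K_0,K_0])$ to a fixed compact interval, and then invoke Lemma~\ref{L: f_k} to make $f_k-\mathrm{id}$ uniformly small there. The treatment of the inverse terms by symmetry also matches the paper's "similarly".
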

\begin{proof}
Let $M$ be a positive dyadic number. We shall prove uniform continuity on the  interval $[-M,M]$. For this we choose $\varepsilon>0$ and a compact set $[-K,K]$, and we shall determine a positive integer $k$ such that for points $t_1$ and $t_2$ in $[-M,M]$ with $0 < t_2-t_1 < 2^{-k}$, we have $d_{K}(\Phi^{t_1},\Phi^{t_2}) < \varepsilon$.

Recall that  $t \mapsto \Phi^t$ preserves the partial order on $\textup{Homeo}^+(\mathbb{R})$. Thus for any $t$ in $[-M,M]$ and any $x$ in $[-K,K]$, the point
$\Phi^t(x)$ belongs to the interval $[\Phi^{-M}(-K), \Phi^{M}(K)]$.
Applying Lemma~\ref{L: f_k}, we can choose $k>0$ such that, for every $y$ in $[\Phi^{-M}(-K), \Phi^{M}(K)]$, $y < f_{k}(y) < y + \varepsilon$ and $ y-\varepsilon <  f_{k}^{-1}(y) < y$.
Furthermore, for every $t_1$ and $t_2$ with $0 < t_2-t_1 < 2^{-k}$ we have $\Phi^{t_2-t_1} < f_{k}$. Thus, for $x$ in $[-K,K]$,
\[
\Phi^{t_2}(x)-\Phi^{t_1}(x) = \Phi^{t_2-t_1}\left(\Phi^{t_1}(x)\right) - \Phi^{t_1}(x) < f^k\left(\Phi^{t_1}(x)\right) - \Phi^{t_1}(x) < \varepsilon.
\]
Similarly $|\Phi^{-t_2}(x)-\Phi^{-t_1}(x)|<\varepsilon$. Hence $d_K(\Phi^{t_1},\Phi^{t_2})<\varepsilon$, as required.
\end{proof}

Finally, we can prove Proposition~\ref{P: real}.

\begin{proof}[Proof of Proposition~\ref{P: real}]
The map $t \mapsto \Phi^{t}$ is uniformly continuous when restricted to bounded subsets so it extends uniquely to a continuous map from $\mathbb{R}$ to $\textnormal{Homeo}^+(\mathbb{R})$. Since the equation $\Phi^{s+t}=\Phi^{s}  \Phi^{t}$ is satisfied whenever $s$ and $t$ are dyadic numbers, by continuity, it is also satisfied whenever $s$ and $t$ are any real numbers. Thus $(\Phi^t)$ is a flow on $\mathbb{R}$ such that $\Phi^1=f$.

We must show that $(\Phi^t)$ generates $\equiv$. By condition (iii) we have that $(\Phi^t(x),\Phi^t(y))\equiv (x,y)$ for each dyadic number $t$ and each pair of points $x$ and $y$. By continuity, and since the equivalence classes are closed,
 this relation holds for all real numbers $t$. This shows that the leaves of the foliation generated by $(\Phi^t)$ coincide with the equivalence classes of $\equiv$; therefore $(\Phi^t)$ generates $\equiv$.

We finish by proving the uniqueness part of the proposition; that is, we show that there is only one flow $(\Phi^t)$ that generates $\equiv$ and satisfies $\Phi^1=f$. Given a flow  $(\Phi^t)$ with these properties, we know that $(\equiv,\Phi^t)$ is a flowable pair for each real number $t$. By Lemma~\ref{L: flowable pair}, this uniquely specifies $\Phi^{2^{-n}}$ for each positive integer $n$. Since the dyadic numbers are dense in $\mathbb{R}$, we deduce by additivity and continuity that $\Phi^t$ is uniquely defined for each real number $t$.
\end{proof}

%%%%%%%%%%%%%%%%%%%%%%%%%%%%%%%%%%%%%%%%%%%%%%%%%%%%%%%%%%%%%%%%%
\section{The equivalence relation $\sim_f$}\label{S: DeltaDelta'}
%%%%%%%%%%%%%%%%%%%%%%%%%%%%%%%%%%%%%%%%%%%%%%%%%%%%%%%%%%%%%%%%%

We return to considering homeomorphisms of the plane. Throughout this section we assume that $f$ is an $\folR$-homeomorphism that satisfies the four point matching  property. In the introduction we described how to define a relation $\sim_f$ on $\Delta\times\Delta'$, and in this section we develop the properties of $\sim_f$.

Recall that, in order to define $\sim_f$, we chose a pair $(x,x')$ in $\Delta\times\Delta'$ and a sequence $(x_k)$ in $\mathbb{R}^2$ such that $x_k\to x$ and $f^{k}(x_k)\to x'$. We first prove that, given the pair $(x,x')$, such sequences exist.

\begin{lemma}\label{L: sequences}
Given a pair $(x,x')$ in $\Delta\times\Delta'$ there exists a sequence $(x_k)$ in $\mathbb{R}^2$ such that $x_k\to x$ and $f^{k}(x_k)\to x'$.
\end{lemma}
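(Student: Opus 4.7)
Write $x=(a,-1)$ and $x'=(b,1)$, and set $u_0=\max(a,b)+1$. The plan is to exploit the explicit Reeb parametrisation: for every $u>u_0$ the leaf $L_u$ contains the two points
\[
P_u=(a,-t_u)\qquad\text{and}\qquad Q_u=(b,s_u),
\]
where $t_u=\tfrac{2}{\pi}\arccos\tfrac{1}{u-a}$ places $P_u$ on the lower branch of $L_u$, and $s_u=\tfrac{2}{\pi}\arccos\tfrac{1}{u-b}$ places $Q_u$ on its upper branch. Both $t_u$ and $s_u$ lie in $(0,1)$ and tend to $1$ as $u\to\infty$, so $P_u\to x$ and $Q_u\to x'$. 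I will locate, for each sufficiently large $k$, a parameter $u_k$ with $f^k(P_{u_k})=Q_{u_k}$ and $u_k\to\infty$; setting $x_k=P_{u_k}$ then gives $x_k\to x$ and $f^k(x_k)=Q_{u_k}\to x'$. (For the finitely many $k$ to which the construction below fails to apply, define $x_k$ arbitrarily.)

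To produce $u_k$ I apply the intermediate value theorem to the continuous map $u\mapsto f^k(P_u)\in L_u$ on $(u_0,\infty)$. As $u\to\infty$ the point $P_u$ tends to $x\in\Delta$, so $f^k(P_u)\to f^k(x)\in\Delta$, whose $y$-coordinate is $-1$. As $u\to u_0^+$ the point $P_u$ stays in a fixed compact subset of $U_2$, and for $k$ large the iterate $f^k(P_u)$ is driven far forward along $L_u$, with $y$-coordinate close to $1$. Let $u^*(k)$ be the right endpoint of the connected component of $\{u>u_0:f^k(P_u)\text{ lies on the upper branch of }L_u\}$ that contains a right neighbourhood of $u_0$. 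On $(u_0,u^*(k))$ the $x$-coordinate of $f^k(P_u)$ varies continuously from $-\infty$ (as $u\to u_0^+$, with $y\to 1$) up to $u^*(k)-1$ (as $u\to u^*(k)^-$, where $f^k(P_u)$ reaches the rightmost point of its leaf). Once $k$ is large enough that $u^*(k)-1>b$, the IVT supplies $u_k\in(u_0,u^*(k))$ with the $x$-coordinate of $f^k(P_{u_k})$ equal to $b$; being on the upper branch at $x=b$, this point is necessarily $Q_{u_k}$.

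The main obstacle is to verify that $u_k\to\infty$, and this is the step I expect to require the most care. My plan is a Dini-style compactness argument: fix $M>u_0$ and $\eta<1$, and consider
\[
A_k(\eta)=\bigl\{u\in[u_0,M]:y\text{-coord}\bigl(f^k(P_u)\bigr)>\eta\bigr\}.
\]
Each $A_k(\eta)$ is open, and because $f$ drives points forward along each leaf toward its upper end, the family $\bigl(A_k(\eta)\bigr)_{k}$ is increasing with union equal to $[u_0,M]$. By compactness, $A_k(\eta)=[u_0,M]$ for all $k\geq K(\eta,M)$. However $y\text{-coord}(f^k(P_{u_k}))=s_{u_k}\leq s_M<1$, so choosing $\eta>s_M$ forces $u_k\notin[u_0,M]$ for all large $k$. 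Since $M$ is arbitrary, $u_k\to\infty$, and the sequence $(x_k)=(P_{u_k})$ has the required convergence properties.
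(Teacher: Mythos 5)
Your proof is correct and is essentially the paper's own argument rewritten in explicit Reeb coordinates: the segments $\{P_u\}_{u\geq u_0}$ and $\{Q_u\}_{u\geq u_0}$ are exactly the transverse arcs $\gamma$ and $\gamma'$ of the paper's proof, your intermediate value step is its separation argument showing that $f^k(\gamma)$ must cross $\gamma'$, and your Dini argument makes precise the paper's observation that $f^k(K)$ accumulates only at infinity for compact $K\subset U_2$. The one point you should make explicit is that the construction really does apply for all but finitely many $k$, i.e.\ that $u^*(k)-1>b$ (and that the $x$-coordinate of $f^k(P_{u_0})$ is below $b$, rather than literally tending to $-\infty$ in $u$) for all large $k$; this is not justified where you use it, but it follows from the very same compactness step, since taking $M=b+2$ and any $\eta\in(0,1)$, the conclusion $A_k(\eta)=[u_0,M]$ for $k\geq K(\eta,M)$ forces $u^*(k)>b+2$.
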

\begin{proof}
Choose a chart $\varphi$ that maps a neighbourhood $N$ of $x$ to $]0,1[^2$, and maps the foliation of $N$ by $\folR$ to the horizontal foliation of $]0,1[^2$. Let $\delta$ be a vertical open segment in $]0,1[^2$ with one end point at $\varphi(x)$. By reflecting $\delta$ in the horizontal line through $\varphi(x)$ if necessary, we can assume that the open segment $\gamma=\varphi^{-1}(\delta)$ lies in the middle component $U_2$ of the Reeb foliation. The segment $\gamma$ has one end point at $x$, and lies transverse to the leaves of $\folR$. Likewise there is an open line segment $\gamma'$ in $U_2$ that lands at $x'$ and lies transverse to $\folR$. By truncating $\gamma$ or $\gamma'$ we can assume that the end points $u$ and $v$ of $\gamma$ and $\gamma'$ that lie in $U_2$ both lie on the same leaf $L$. Let $V$ denote the connected component of $U_2\setminus L$ that contains $\gamma$ and $\gamma'$.

Choose a positive integer $m$ such that $f^m(u)>v$. For each integer $k>m$, the line segment $f^k(\gamma)$ lies wholly in $V$, and it has one end point on $\Delta$, and the other end point on $L$. Since $\gamma'$ disconnects $V$ into two components, and there are points of $f^k(\gamma)$ in each of these components, we deduce the existence of a point $x_k$ in $\gamma$ such that $f^k(x_k)\in \gamma'$.

It remains to show that $x_k\to x$ and $f^k(x_k)\to x'$. Observe that, for each compact subset $K$ of $U_2$, both sequences $K,f(K),f^2(K),\ldots$ and $K,f^{-1}(K),f^{-2}(K),\ldots$ accumulate only at $\infty$ (that is, they have no accumulation points in $\Delta\cup\Delta'\cup U_2$). This means that neither sequence $(x_k)$ nor $(f^k(x_k))$ can accumulate within $U_2$, as required.
\end{proof}

\begin{lemma}\label{L: leaf}
Given  a pair $(x,x')$ in $\Delta\times\Delta'$ and a point $y$ in $\Delta$ there exists a unique point $y'$ in $\Delta'$ such that $(x,x')\sim_f (y,y')$.
\end{lemma}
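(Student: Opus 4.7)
The proof splits into existence and uniqueness. For existence, I would first apply Lemma~\ref{L: sequences} to the pair $(x,x')$, obtaining a sequence $(x_k)$ with $x_k\to x$ and $f^k(x_k)\to x'$. The main remaining step is to construct $(y_k)$ with $y_k\in L(x_k)$ and $y_k\to y$; once this is in hand, the four point matching property immediately yields a point $y'\in\Delta'$ with $f^k(y_k)\to y'$, and $(x,x')\sim_f(y,y')$ by definition.

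The construction of $(y_k)$ relies on the explicit description of $\folR$. Since $x_k\to x\in\Delta$ and $x_k\in U_2$, the leaves $L(x_k)$ have their leaf parameter (the $u$ in the formula from the introduction) tending to $+\infty$. Writing $y=(y_1,-1)$, the equation $u_k-\sec(\pi t/2)=y_1$ has, for all sufficiently large $k$, a solution $t_k\in(-1,0)$ with $t_k\to -1$; the point $y_k=(y_1,t_k)$ then lies on $L(x_k)$ and satisfies $y_k\to y$. A chart-plus-transverse-arc argument modelled on the proof of Lemma~\ref{L: sequences} gives the same conclusion.

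For uniqueness, suppose that $(x,x')\sim_f(y,y'_1)$ and $(x,x')\sim_f(y,y'_2)$, witnessed respectively by sequences $(x^{(i)}_k)$ and $(y^{(i)}_k)$ for $i=1,2$. I would then interleave them: set $x_k:=x^{(1)}_k$ and $y_k:=y^{(1)}_k$ for odd $k$, and $x_k:=x^{(2)}_k$, $y_k:=y^{(2)}_k$ for even $k$. Each of the required conditions $x_k\to x$, $f^k(x_k)\to x'$, $y_k\to y$ and $y_k\in L(x_k)$ persists along both subsequences, so the four point matching property applies to the interleaved sequence and forces $(f^k(y_k))$ to converge to some $y'\in\Delta'$. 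But an interleaved sequence converges only if its two alternating subsequences share a common limit; since these tend respectively to $y'_1$ and $y'_2$, we conclude $y'_1=y'_2$.

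The only real obstacle is the geometric construction of $(y_k)$ on the prescribed leaves $L(x_k)$ in the existence proof; this uses the specific structure of the Reeb foliation, unlike the uniqueness step, which is a clean interleaving trick powered entirely by the four point matching property.
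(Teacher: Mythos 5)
Your proposal is correct and takes essentially the same route as the paper: for existence, pick a sequence $(x_k)$ witnessing the pair $(x,x')$, construct $(y_k)$ on the leaves $L(x_k)$ converging to $y$ (the paper simply observes that all but finitely many $L(x_k)$ meet any neighbourhood of $y$; your explicit Reeb-formula computation is just one way to see this), and invoke the four point matching property; for uniqueness, interleave the two witnessing families and apply the four point matching property again, which is exactly the paper's splicing trick.
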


By exchanging the r\^oles of $\Delta$ and $\Delta'$, we see that it is also true that given the pair $(x,x')$ and a point $y'$ in $\Delta'$ there exists a unique point $y$ in $\Delta$ such that $(x,x')\sim_f (y,y')$.

\begin{proof}
Choose a sequence $(x_k)$ in $\mathbb{R}^2$ such that $x_k\to x$ and $f^{k}(x_k)\to x'$. Given any neighbourhood $N$ of the point $y$, all but finitely many of the leaves $L(x_k)$ intersect $N$, therefore we may choose a sequence $(y_k)$ in $\mathbb{R}^2$ such that $y_k\in L(x_k)$ and $y_k\to y$. (In future we often have to assert the existence of such a sequence $(y_k)$, and we do so without elaboration.) By the four point matching property we conclude that the sequence $(f^k(y_k))$ converges to a point $y'$. Hence $(x,x')\sim_f (y,y')$.

It remains to prove that this point $y'$ is unique. To this end, choose another point $z'$
in $\Delta'$ such that $(x,x')\sim_f (y,z')$. Then there are sequences $(u_k)$ and $(v_k)$ in $\mathbb{R}^2$ such that $v_k\in L(u_k)$, and $u_k\to x$, $f^{k}(u_k)\to x'$, $v_k\to y$, and $f^{k}(v_k)\to z'$.  We can apply the four point matching property to the spliced sequences $x_1,u_2,x_3,u_4,\ldots$ and $y_1,v_2,y_3,v_4,\ldots$ to deduce that the sequence $f^{1}(y_1),f^{2}(v_2),f^{3}(y_3),f^{4}(v_4),\ldots$ converges to a single value in $\Delta'$. Since $(f^{2k-1}(y_{2k-1}))$ and $(f^{2k}(v_{2k}))$ are both subsequences of this final sequence we conclude that $y'=z'$.
\end{proof}

\begin{lemma}
The relation $\sim_f$ is an equivalence relation on $\Delta\times\Delta'$.
\end{lemma}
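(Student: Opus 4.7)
My plan is to verify the three axioms of an equivalence relation separately, treating reflexivity and symmetry first as warm-ups and then concentrating on transitivity, which is the only substantive step.

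For reflexivity of a pair $(x,x') \in \Delta \times \Delta'$, I would invoke Lemma~\ref{L: sequences} to obtain a sequence $(x_k)$ with $x_k \to x$ and $f^k(x_k) \to x'$, and then take $y_k = x_k$ in the definition of $\sim_f$; trivially $x_k \in L(x_k)$, so this sequence pair witnesses $(x,x') \sim_f (x,x')$. Symmetry is definitional: if $(x_k), (y_k)$ with $y_k \in L(x_k)$ witness $(x,x') \sim_f (y,y')$, then the same sequences with their r\^oles swapped witness $(y,y') \sim_f (x,x')$, since $y_k \in L(x_k)$ is equivalent to $x_k \in L(y_k)$.

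The main content is transitivity. Suppose $(x,x') \sim_f (y,y')$ and $(y,y') \sim_f (z,z')$, and let $(x_k), (y_k)$ be sequences witnessing the first relation, so that $y_k \in L(x_k)$, $x_k \to x$, $y_k \to y$, $f^k(x_k) \to x'$, and $f^k(y_k) \to y'$. Since $z \in \Delta$, the same chart-based argument used in Lemmas~\ref{L: sequences} and~\ref{L: leaf} guarantees, for all sufficiently large $k$, the existence of points $w_k \in L(x_k) = L(y_k)$ with $w_k \to z$. The four point matching property, applied to the sequence $(x_k)$ and the transversal sequence $(w_k)$, then forces $(f^k(w_k))$ to converge to some point $w' \in \Delta'$. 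Thus the pair $(x_k), (w_k)$ witnesses $(x,x') \sim_f (z,w')$, and it only remains to identify $w'$ with $z'$.

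For this identification, I would observe that the very same pair of sequences $(y_k), (w_k)$ both lie on the leaves $L(x_k)$, with $y_k \to y$, $w_k \to z$, $f^k(y_k) \to y'$, $f^k(w_k) \to w'$; this directly witnesses $(y,y') \sim_f (z,w')$. Combined with the hypothesis $(y,y') \sim_f (z,z')$, the uniqueness clause of Lemma~\ref{L: leaf} gives $w' = z'$, which completes the proof. I do not anticipate any genuine obstacle: the only delicate point is producing the "bridging" sequence $(w_k)$ on the correct leaves, and the substantive work has already been done in Lemmas~\ref{L: sequences} and~\ref{L: leaf}, leaving this proof essentially a pair of applications of the uniqueness property of $\sim_f$ on each horizontal slice.
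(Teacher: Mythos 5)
Your proof is correct and follows essentially the same route as the paper: reflexivity from Lemma~\ref{L: sequences}, symmetry from the definition, and transitivity by building a bridging sequence $(w_k)$ on the leaves $L(x_k)=L(y_k)$ converging to $z$, applying the four point matching property, and then identifying the limit with $z'$ via the uniqueness clause of Lemma~\ref{L: leaf}. The only difference is that you spell out the identification step (via the witnessing pair $(y_k),(w_k)$) slightly more explicitly than the paper does.
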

\begin{proof}
That $\sim_f$ is reflexive follows immediately from Lemma~\ref{L: sequences}, and that $\sim_f$ is symmetric follows immediately from its definition. It remains to show that $\sim_f$ is transitive, and to this end we suppose that  $(x,x')\sim_f (y,y')$, and $(y,y')\sim_f (z,z')$. As usual we choose sequences $(x_k)$ and $(y_k)$ in $\mathbb{R}^2$ such that $y_k\in L(x_k)$ and  $x_k\to x$, $f^{k}(x_k)\to x'$, $y_k\to y$, and $f^{k}(y_k)\to y'$. Let $(z_k)$ be a sequence in $\mathbb{R}^2$ such that $z_k\in L(x_k)$ and $z_k\to z$ (recall that $L(x_k)=L(y_{k})$).
 By the four point matching property, the sequence $(f^{k}(z_k))$ converges to a point in $\Delta'$, which, by Lemma~\ref{L: leaf}, must be $z'$.
 Therefore $(x,x')\sim_f(z,z')$.
\end{proof}

Given a pair $(x,x')\in\Delta\times\Delta'$, we may apply  Lemma~\ref{L: leaf}  to define a map $\alpha_{xx'}:\Delta\to\Delta'$ by the equation $\alpha_{xx'}(y)=y'$, where $y'$ is the unique member of $\Delta'$ such that $(y,y')\sim_f(x,x')$. An application of the comment following the statement of Lemma~\ref{L: leaf}  shows that this map is a bijection.

\begin{lemma}\label{L: alpha}
The map $\alpha_{xx'}$ is an order preserving homeomorphism from $\Delta$ to $\Delta'$.
\end{lemma}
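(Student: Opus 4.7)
The comment after Lemma~\ref{L: leaf} already gives that $\alpha_{xx'}$ is a bijection, and since $\Delta$ and $\Delta'$ are each homeomorphic to an oriented copy of $\mathbb{R}$, any order-preserving bijection between them is automatically a homeomorphism. The content of the lemma therefore reduces to order preservation.

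Fix $y_1<y_2$ on $\Delta$ and set $y'_i=\alpha_{xx'}(y_i)$. Pick a sequence $(x_k)$ with $x_k\to x$ and $f^k(x_k)\to x'$; for large $k$ the leaf $L(x_k)$ must lie in $U_2$, since every other leaf is globally $f$-invariant and cannot carry a sequence converging to $x\in\Delta$ whose $f^k$-images converge to $x'\in\Delta'$. Next choose $y_{i,k}\in L(x_k)$ with $y_{i,k}\to y_i$ for $i=1,2$; by the four point matching property and Lemma~\ref{L: leaf}, $f^k(y_{i,k})\to y'_i$.

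The crux is the following local order-comparison sub-claim: if $a<b$ on $\Delta$ and $a_k,b_k$ lie on a common leaf in $U_2$ with $a_k\to a$ and $b_k\to b$, then $a_k<b_k$ on that leaf for all sufficiently large $k$. To prove this I would cover the oriented arc of $\Delta$ from $a$ to $b$ by finitely many chart domains $N_0,\ldots,N_n$. In each chart, orientation preservation of $\varphi$ together with the matching of leaf orientations forces any leaf of $U_2$ meeting $N_i$ to appear as a horizontal arc oriented in agreement with $\Delta\cap N_i$; for $N_i$ small enough, such a leaf enters $N_i$ in a single component, because in the Reeb model a leaf of $U_2$ approaches $\Delta$ along only one of its two ends. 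Interpolating auxiliary sequences on the common leaf that converge to intermediate points of $\Delta$ chosen in the successive overlaps $N_{i-1}\cap N_i$, and chaining the local comparisons by transitivity, delivers $a_k<b_k$. The same argument near $\Delta'$ gives the converse implication: if on a common leaf in $U_2$ we have $a'_k<b'_k$ with $a'_k\to a'\in\Delta'$ and $b'_k\to b'\in\Delta'$, then $a'\leq b'$.

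With the sub-claim in hand the lemma follows quickly. Applied to the sequences $(y_{i,k})$ it gives $y_{1,k}<y_{2,k}$ on $L(x_k)$; since $f$ preserves every leaf and its orientation, $f^k(y_{1,k})<f^k(y_{2,k})$ on the same leaf; and the $\Delta'$ version of the sub-claim then forces $y'_1\leq y'_2$, with strict inequality supplied by the injectivity of $\alpha_{xx'}$ established above. I expect the main obstacle to be the sub-claim itself, and in particular the step that ensures each leaf $L_k$ enters a small chart about a point of $\Delta$ (or $\Delta'$) in a single arc, so that the local comparison of orientations transports faithfully along a potentially long segment of $\Delta$; this is where the particular topology of the Reeb foliation near its nonseparated leaves is essential.
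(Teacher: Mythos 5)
Your proof follows essentially the same route as the paper's: choose sequences on a common leaf converging to $y^1<y^2$, observe that the leaf order is eventually respected, push forward by $f^k$ (which preserves each leaf's order), pass to the limit on $\Delta'$, and upgrade $\leq$ to $<$ using injectivity from Lemma~\ref{L: leaf}. The only difference is that the paper simply asserts the step ``for large enough $k$, $y^1_k<y^2_k$ on $L(x_k)$'' which you isolate as the crux and justify with a chart-covering argument; that elaboration is correct and merely fills in a detail the paper leaves implicit.
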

\begin{proof}
Because $\alpha_{xx'}$ is a bijection, we need only show that it preserves order. Choose points $y^1$ and $y^2$ in $\Delta$ such that $y^1<y^2$. Let $(x_k)$ be a sequence in $\mathbb{R}^2$ such that $x_k\to x$ and $f^{k}(x_k)\to x'$. For $j=1,2$ choose a sequence $(y^j_k)$ such that $y^j_k\in L(x_k)$ and $y^j_k\to y_j$. By the four point matching property, $(f^{k}(y^j_k))$ converges to a point $y'^j_k$, which is $\alpha_{xx'}(y^j)$. For large enough integers $k$, we have $y^1_k<y^2_k$ on $L(x_{k})$.
 Since $f$ preserves order on $L(x_k)$ we also have that $f^{k}(y^1_k)<f^{k}(y^2_k)$. Therefore either $y'^1<y'^2$ or $y'^1=y'^2$, and in fact the latter case cannot happen by the uniqueness part of Lemma~\ref{L: leaf}.
\end{proof}

Given two equivalent pairs $(x_1,x'_1)$ and $(x_2,x'_2)$, the maps $\alpha_{x_1x'_1}$ and $\alpha_{x_2x'_2}$ are identical. Indeed  the graph of $\alpha_{xx'}$ is  the equivalence class  of the pair $(x,x')$ in   $\Delta\times\Delta'$.
 Let $\alpha: \Delta\times\Delta'\times\Delta\to\Delta'$ be the map given by $\alpha(x,x',y)=y'$ if and only if $(x,x')\sim_f(y,y')$. Since $\Delta$ and $\Delta'$ are each copies of $\mathbb{R}$, we may invoke Lemma~\ref{L: phi}
  to deduce that $\alpha$  is a homeomorphism in each of its arguments separately.

We record a few basic properties of the interaction between $f$ and $\sim_f$ in the next lemma.

\begin{lemma}\label{L: f}
Given two equivalent pairs $(x,x')$ and $(y,y')$ in $\Delta\times\Delta'$ we have $(x,x')\sim_f (f(x),f(x'))$, $(x,f(x'))\sim_f (y,f(y'))$, and $(f(x),x')\sim_f (f(y),y')$.
\end{lemma}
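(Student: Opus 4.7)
The plan is to verify each of the three equivalences by constructing explicit witnessing sequences; this is essentially an index-shifting exercise starting from whatever sequences witness the given equivalence $(x,x')\sim_f(y,y')$. First I would note that each of the target pairs is well-formed: since $f$ is an $\folR$-homeomorphism it preserves $\Delta$ and $\Delta'$, so $f(x),f(y)\in\Delta$ and $f(x'),f(y')\in\Delta'$.

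For the first equivalence $(x,x')\sim_f(f(x),f(x'))$, I would apply Lemma~\ref{L: sequences} to obtain a sequence $(x_k)$ with $x_k\to x$ and $f^k(x_k)\to x'$, and then set $y_k=f(x_k)$. Since $f$ preserves each leaf, $y_k\in L(x_k)$; and plainly $y_k\to f(x)$ while $f^k(y_k)=f^{k+1}(x_k)=f(f^k(x_k))\to f(x')$ by continuity of $f$. That is exactly the data required by the definition of $\sim_f$.

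For the second equivalence $(x,f(x'))\sim_f(y,f(y'))$, I would take sequences $(x_k)$ and $(y_k)$ with $y_k\in L(x_k)$ witnessing $(x,x')\sim_f(y,y')$, and then shift the index: define $a_k=x_{k-1}$ and $b_k=y_{k-1}$ (for $k\ge 2$; a finite initial segment is irrelevant). Then $b_k\in L(a_k)$, $a_k\to x$, $b_k\to y$, and
\[
f^k(a_k)=f\bigl(f^{k-1}(x_{k-1})\bigr)\to f(x'),\qquad f^k(b_k)=f\bigl(f^{k-1}(y_{k-1})\bigr)\to f(y'),
\]
which gives the required equivalence. For the third equivalence $(f(x),x')\sim_f(f(y),y')$ I would use the symmetric shift in the other direction: set $a_k=f(x_{k+1})$ and $b_k=f(y_{k+1})$. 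Since $y_{k+1}\in L(x_{k+1})$ and $f$ preserves leaves we have $b_k\in L(a_k)$; moreover $a_k\to f(x)$, $b_k\to f(y)$, and $f^k(a_k)=f^{k+1}(x_{k+1})\to x'$, $f^k(b_k)=f^{k+1}(y_{k+1})\to y'$, as needed.

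There is no real obstacle here; each statement reduces to a one-step reindexing of the witnessing sequences, together with the observations that $f$ is continuous and preserves every leaf. The only bookkeeping is confirming that $y_k\in L(x_k)$ is preserved under the shifts, which it is since $f$ sends leaves to themselves.
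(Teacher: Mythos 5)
Your proof is correct and follows the same approach as the paper: the paper writes out only the first equivalence (via $y_k=f(x_k)$, exactly as you do) and declares the other two elementary, while you supply the analogous index-shifting witnesses for all three. No issues.
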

\begin{proof}
The proofs of these three equivalences are elementary, and we supply only the first. By Lemma~\ref{L: sequences} there is a sequence $(x_k)$  in $\mathbb{R}^2$   such that $x_k\to x$ and $f^{k}(x_k)\to x'$. Define $y_k=f(x_k)$, then the sequence $(y_k)$ satisfies $y_k\to f(x)$ and $f^{k}(y_k)\to f(x')$. Therefore $(x,x')\sim_f (f(x),f(x'))$.
\end{proof}

%%%%%%%%%%%%%%%%%%%%%%%%%%%%%%%%%%%%%%%%%%%%%%%%%%%%%%%%%%%%%%%%%%%%
\section{The equivalence relation $\approx_f$ }\label{S: DeltaDelta}
%%%%%%%%%%%%%%%%%%%%%%%%%%%%%%%%%%%%%%%%%%%%%%%%%%%%%%%%%%%%%%%%%%%%

We assume throughout this section that $f$ is an $\folR$-homeomorphism that satisfies both the four and eight point matching properties. Recall the definition of $\approx_f$ on $\Delta\times\Delta$: $(x_1,y_1)\approx_f (x_2,y_2)$ if and only if there are points $x'$ and $y'$ in $\Delta'$ such that $(x_1,x')\sim_f (y_1,y')$ and $(x_2,x')\sim_f (y_2,y')$. The point $x'$ in this definition can be chosen to be any point on $\Delta'$. To see this, choose any element $z'$
 of  $\Delta'$. Then, by Lemma~\ref{L: leaf}, there is an element $v'$ of $\Delta'$ such that $(x_1,z')\sim_f (y_1,v')$.
It follows from the eight point matching property that $(x_2,z')\sim_f (y_2,v')$.

\begin{lemma}\label{L: bijection}
Given points $x_1$, $y_1$, and $x_2$ in $\Delta$, there is a unique point $y_2$ in $\Delta$ such that $(x_1,y_1)\approx_f (x_2,y_2)$.
\end{lemma}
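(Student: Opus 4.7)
The plan is to use the preceding observation (that the intermediate point $x' \in \Delta'$ in the definition of $\approx_f$ is arbitrary) together with both forms of Lemma~\ref{L: leaf} to produce $y_2$ and to prove that it is unique.

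First I would establish existence. Fix any reference point $x' \in \Delta'$. Applying Lemma~\ref{L: leaf} to the pair $(x_1,x') \in \Delta\times\Delta'$ and the point $y_1 \in \Delta$ yields a unique $y' \in \Delta'$ such that $(x_1,x')\sim_f(y_1,y')$. Next, applying the converse form of Lemma~\ref{L: leaf} noted in the remark just after its statement, this time to the pair $(x_2,x')$ and the point $y' \in \Delta'$, I obtain a unique $y_2 \in \Delta$ such that $(x_2,x')\sim_f(y_2,y')$. Directly from the definition of $\approx_f$, these two equivalences witness $(x_1,y_1)\approx_f(x_2,y_2)$.

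For uniqueness, suppose $(x_1,y_1)\approx_f(x_2,y_2)$ and $(x_1,y_1)\approx_f(x_2,\tilde y_2)$. The key input here is the observation preceding the lemma: the first coordinate on $\Delta'$ witnessing the relation $\approx_f$ can be chosen freely. So I can use the same reference point $x' \in \Delta'$ for both relations. This produces $y',\tilde y' \in \Delta'$ with $(x_1,x')\sim_f(y_1,y')$, $(x_2,x')\sim_f(y_2,y')$, $(x_1,x')\sim_f(y_1,\tilde y')$, and $(x_2,x')\sim_f(\tilde y_2,\tilde y')$. Applying the uniqueness statement in Lemma~\ref{L: leaf} to $(x_1,x')$ and $y_1$ forces $y'=\tilde y'$; then applying the converse uniqueness statement (from the remark following Lemma~\ref{L: leaf}) to $(x_2,x')$ and $y'$ forces $y_2=\tilde y_2$.

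I do not anticipate any genuine obstacle here: the work has already been done in establishing Lemma~\ref{L: leaf} and in deriving, from the eight point matching property, the fact that the intermediate point on $\Delta'$ used to witness $\approx_f$ may be chosen arbitrarily. Thus this lemma is essentially a two-step bookkeeping argument applying Lemma~\ref{L: leaf} twice, once in each direction between $\Delta$ and $\Delta'$.
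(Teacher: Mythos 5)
Your proposal is correct and follows essentially the same route as the paper: fix an arbitrary $x'\in\Delta'$, apply Lemma~\ref{L: leaf} in each direction to construct $y_2$, and for uniqueness use the observation preceding the lemma (that the witnessing point on $\Delta'$ may be taken to be the same $x'$) together with the two uniqueness statements of Lemma~\ref{L: leaf}. No gaps.
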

\begin{proof}
Choose an arbitrary point $x'$ on $\Delta'$. Then by Lemma~\ref{L: leaf} there is a point $y'$ on $\Delta'$ such that $(x_1,x')\sim_f (y_1,y')$. Apply Lemma~\ref{L: leaf} again---this time with the r\^oles of $\Delta$ and $\Delta'$ reversed---to deduce the existence of a point $y_2$ in $\Delta$ such that $(x_2,x')\sim_f (y_2,y')$. Therefore $(x_1,y_1)\approx_f (x_2,y_2)$. Suppose there is another point $y^*_2$ in $\Delta$ such that $(x_1,y_1)\approx_f (x_2,y^*_2)$. Then there is a point $z'$
 in $\Delta'$ such that $(x_1,x')\sim_f (y_1,z')$ and $(x_2,x')\sim_f (y^*_2,z')$. However, we know that $(x_1,x')\sim_f (y_1,y')$; thus by Lemma~\ref{L: leaf}
  we see that $y'=z'$. Apply Lemma~\ref{L: leaf} again, this time to the relations $(x_2,x')\sim_f (y_2,y')$ and $(x_2,x')\sim_f (y^*_2,z')$, to conclude that $y^*_2=y_2$.
\end{proof}

\begin{lemma}\label{L: equiv}
The relation $\approx_f$ is an equivalence relation on $\Delta\times\Delta$.
\end{lemma}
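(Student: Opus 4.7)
The plan is to check reflexivity, symmetry, and transitivity in turn, and to lean heavily on the observation stated immediately after the definition of $\approx_f$, namely that the auxiliary point $x'\in\Delta'$ in the definition can be chosen freely. This is where the eight point matching property does its work, so once that observation is invoked, the rest is essentially bookkeeping with Lemma~\ref{L: leaf}.

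For reflexivity, I fix an arbitrary point $x'\in\Delta'$ and take $y'=\alpha_{x_1x'}(y_1)$, which exists by Lemma~\ref{L: leaf}. Then $(x_1,x')\sim_f(y_1,y')$ serves as both of the two equivalences required by the definition of $(x_1,y_1)\approx_f(x_1,y_1)$. Symmetry is immediate: the defining condition treats $(x_1,y_1)$ and $(x_2,y_2)$ symmetrically, so exchanging them is trivial.

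Transitivity is the only step that requires the eight point matching property, and I handle it as follows. Suppose $(x_1,y_1)\approx_f(x_2,y_2)$ and $(x_2,y_2)\approx_f(x_3,y_3)$. I fix one point $x'\in\Delta'$ once and for all. By the observation after the definition of $\approx_f$, the first relation is witnessed by some $y'\in\Delta'$ with $(x_1,x')\sim_f(y_1,y')$ and $(x_2,x')\sim_f(y_2,y')$; and the second relation is witnessed by some $y''\in\Delta'$ with $(x_2,x')\sim_f(y_2,y'')$ and $(x_3,x')\sim_f(y_3,y'')$. The two equivalences $(x_2,x')\sim_f(y_2,y')$ and $(x_2,x')\sim_f(y_2,y'')$ together with the uniqueness part of Lemma~\ref{L: leaf} force $y'=y''$. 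Hence $(x_1,x')\sim_f(y_1,y')$ and $(x_3,x')\sim_f(y_3,y')$ simultaneously witness $(x_1,y_1)\approx_f(x_3,y_3)$.

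The main obstacle is purely conceptual: getting comfortable with the freedom to choose $x'\in\Delta'$ arbitrarily in the definition of $\approx_f$. Once one accepts this freedom (which is essentially a restatement of the eight point matching property via Lemma~\ref{L: leaf}), transitivity reduces to an application of the uniqueness of $y'$ given $x$, $x'$, and $y$. No continuity or dynamical argument is needed beyond what has already been established in Sections~\ref{S: abstract} and \ref{S: DeltaDelta'}.
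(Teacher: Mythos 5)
Your proof is correct and follows essentially the same route as the paper: reflexivity and symmetry are immediate, and transitivity is the one place the eight point matching property enters. The only cosmetic difference is that you invoke that property indirectly, via the previously established observation that the auxiliary point $x'$ may be chosen freely, and then close the argument with the uniqueness part of Lemma~\ref{L: leaf}, whereas the paper applies the eight point matching property directly to three of the four witnessing $\sim_f$ relations; both arguments are equally valid and of the same depth.
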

\begin{proof}
That $\approx_f$ is reflexive follows immediately from the four point matching property (with any choice of $x'$), and that $\approx_f$ is symmetric follows immediately from the definition of $\approx_f$. It remains to show that $\approx_f$ is transitive, and to this end we suppose that $(x_1,y_1)\approx_f (x_2,y_2)$ and $(x_2,y_2)\approx_f (x_3,y_3)$. Then there are points $x'_1$, $y'_1$, $x'_2$, and $y'_2$ in $\Delta'$ such that $(x_1,x'_1)\sim_f (y_1,y'_1)$ and
$(x_2,x'_1)\sim_f (y_2,y'_1)$, and $(x_2,x'_2)\sim_f (y_2,y'_2)$ and
$(x_3,x'_2)\sim_f (y_3,y'_2)$. We apply the eight point matching property to
the first three of these $\sim_f$ relations to  see that $(x_1,x'_2)\sim_f (y_1,y'_2)$. This relation coupled with the fourth of the four listed $\sim_f$ relations shows that $(x_1,y_1)\approx_f (x_3,y_3)$.
\end{proof}

Given two points $x_1$ and $x_2$ in $\Delta$ we can use Lemma~\ref{L: bijection} to construct a bijection $\beta_{x_1x_2}:\Delta\to\Delta$ given by $\beta_{x_1x_2}(y_1)=y_2$ if and only if $(x_1,y_1)\approx_f(x_2,y_2)$.

\begin{lemma}\label{L: op}
The map $\beta_{x_1x_2}$ is an order preserving homeomorphism of $\Delta$.
\end{lemma}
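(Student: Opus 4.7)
The plan is to express $\beta_{x_1x_2}$ as a composition of two maps $\alpha$ from Section~\ref{S: DeltaDelta'}, and then quote Lemma~\ref{L: alpha}. The crucial tool is the observation made at the very start of Section~\ref{S: DeltaDelta} (just after the definition of $\approx_f$): in checking $(x_1,y_1)\approx_f(x_2,y_2)$ one is free to choose the point $x'\in\Delta'$ appearing in the definition arbitrarily, the matching $y'$ then being forced.

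Concretely, I would fix once and for all an arbitrary point $x'\in\Delta'$. Given $y_1\in\Delta$, set $y'=\alpha_{x_1x'}(y_1)$, so that $(x_1,x')\sim_f(y_1,y')$. Using the comment following Lemma~\ref{L: leaf} (existence and uniqueness of the $\Delta$-coordinate when the $\Delta'$-coordinate is prescribed), there is a unique $y_2\in\Delta$ with $(x_2,x')\sim_f(y_2,y')$, namely $y_2=\alpha_{x_2x'}^{-1}(y')$. By the flexibility in the choice of $x'$ recalled above, these two $\sim_f$-relations precisely witness $(x_1,y_1)\approx_f(x_2,y_2)$, so by the uniqueness in Lemma~\ref{L: bijection},
\[
\beta_{x_1x_2}(y_1)\;=\;\alpha_{x_2x'}^{-1}\bigl(\alpha_{x_1x'}(y_1)\bigr).
\]

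Both $\alpha_{x_1x'}$ and $\alpha_{x_2x'}$ are order preserving homeomorphisms from $\Delta$ onto $\Delta'$ by Lemma~\ref{L: alpha}, so their composition (with the second inverted) is an order preserving homeomorphism of $\Delta$ onto itself, which is exactly what is claimed. There is no significant obstacle here; the only thing to be careful about is invoking the ``$x'$ is arbitrary'' observation in the correct direction, namely to guarantee that the same $x'$ can be used simultaneously for both pairs $(x_1,y_1)$ and $(x_2,y_2)$.
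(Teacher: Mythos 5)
Your proof is correct and is essentially identical to the paper's: the paper also writes $\beta_{x_1x_2}=\alpha_{x_2x'}^{-1}\alpha_{x_1x'}$ for an arbitrary $x'\in\Delta'$ and then invokes Lemma~\ref{L: alpha}. Your version merely spells out the justification of this composition formula in more detail.
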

\begin{proof}
Recall the definition of $\alpha_{xx'}$ from the previous section. Given any point $x'$ in $\Delta'$ we have $\beta_{x_1x_2} = \alpha_{x_2x'}^{-1}\alpha_{x_1x'}$, and the result follows immediately from Lemma~\ref{L: alpha}.
\end{proof}

Since $\Delta$ is homeomorphic to $\mathbb{R}$ we may ask whether $(\approx_f,{f|}_\Delta)$ is a flowable pair.

\begin{lemma}\label{L: pair}
The quantity $(\approx_f,{f|}_\Delta)$ is a flowable pair.
\end{lemma}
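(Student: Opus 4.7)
The plan is to verify the defining ingredients of a flowable pair: $\approx_f$ is an equivalence relation (already supplied by Lemma~\ref{L: equiv}); each equivalence class is the graph of an order-preserving homeomorphism of $\Delta\cong\mathbb{R}$; and conditions (i), (ii), (iii) from Section~\ref{S: abstract} hold for the pair $(\approx_f, f|_\Delta)$.

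Conditions (i)--(iii) I handle first. Condition (i) --- the symmetry $(x_1, x_2)\approx_f(y_1, y_2)\Rightarrow(x_2, x_1)\approx_f(y_2, y_1)$ --- is immediate from the definition, since the conjunction ``$(x_1, x')\sim_f(y_1, y')$ and $(x_2, x')\sim_f(y_2, y')$'' is manifestly symmetric in the subscripts $1$ and $2$. For condition (ii) I exploit the observation at the start of Section~\ref{S: DeltaDelta} that the $\Delta'$-witness in the definition of $\approx_f$ may be chosen arbitrarily: given $(x_1, x_2)\approx_f(y_1, y_2)$ and $(x_2, x_3)\approx_f(y_2, y_3)$, I select a single $x'\in\Delta'$ to witness both relations, and Lemma~\ref{L: leaf} then forces the partner points in $\Delta'$ to agree on a common $y'$ with $(x_i, x')\sim_f(y_i, y')$ for $i=1,2,3$; the $i=1,3$ instances yield $(x_1, x_3)\approx_f(y_1, y_3)$. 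Condition (iii) splits into two parts, each supplied by Lemma~\ref{L: f}: for $(x, y)\approx_f(f(x), f(y))$ use the witness pair $(x', f(x'))$, with $(x, x')\sim_f(f(x), f(x'))$ and $(y, x')\sim_f(f(y), f(x'))$ both coming from the first equivalence of Lemma~\ref{L: f}; for $(x, f(x))\approx_f(y, f(y))$ take any $x'\in\Delta'$, set $y'=\alpha_{xx'}(y)$, and apply the third equivalence of Lemma~\ref{L: f} to promote $(x, x')\sim_f(y, y')$ to $(f(x), x')\sim_f(f(y), y')$.

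The remaining, more delicate task is to show that each equivalence class of $\approx_f$ is the graph of an order-preserving homeomorphism of $\Delta$. Fix $(x_1, y_1)\in\Delta\times\Delta$, choose any $x'\in\Delta'$, and set $y'=\alpha_{x_1 x'}(y_1)$. By the same reasoning already used for condition (ii), the equivalence class of $(x_1, y_1)$ coincides with the level set
\[
\{(x_2, y_2)\in\Delta\times\Delta : \alpha(x_2, x', y_2) = y'\}.
\]
The application of Lemma~\ref{L: phi} made just before Lemma~\ref{L: f} translates, in the present setting, to $\alpha$ being order-preserving in its third argument and order-reversing in its first, while Corollary~\ref{C: cts} supplies joint continuity. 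A standard monotone-implicit-function argument then exhibits the level set as the graph of a continuous, strictly order-preserving function $F\colon\Delta\to\Delta$. Bijectivity of $F$ follows from Lemma~\ref{L: bijection} (surjectivity by applying that lemma after condition~(i)), so $F$ is an order-preserving homeomorphism. I expect this last step to be the main obstacle: one has to correctly marshal the mixed monotonicity of $\alpha$ (increasing in the third variable, decreasing in the first) together with joint continuity in order to recognise the equivalence class as a well-defined, strictly order-preserving graph defined on all of $\Delta$.
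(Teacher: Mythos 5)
Your decomposition matches the paper's: equivalence relation from Lemma~\ref{L: equiv}, equivalence classes as graphs of order preserving homeomorphisms, then conditions (i)--(iii). The graph step is in fact the strongest part of your write-up: identifying the class of $(x_1,y_1)$ with the level set $\{(x_2,y_2):\alpha(x_2,x',y_2)=y'\}$ and exploiting the mixed monotonicity of $\alpha$ (decreasing in the first slot, increasing in the third) is the right argument, and it is more explicit than the paper's bare citation of Lemma~\ref{L: op}.

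The gap is in your verification of conditions (i)--(iii): you use a transposed witness conjunction throughout. By the definition of $\approx_f$, the relation $(x_1,x_2)\approx_f(y_1,y_2)$ is witnessed by $(x_1,x')\sim_f(x_2,y')$ together with $(y_1,x')\sim_f(y_2,y')$ --- each $\sim_f$ relation links the \emph{two coordinates of one $\approx_f$-pair} --- whereas you write $(x_1,x')\sim_f(y_1,y')$ and $(x_2,x')\sim_f(y_2,y')$, which is the witness for the transposed statement $(x_1,y_1)\approx_f(x_2,y_2)$. As a result, your argument for (i) proves the symmetry of $\approx_f$ as a relation (already contained in Lemma~\ref{L: equiv}) rather than the coordinate swap $(x_1,x_2)\approx_f(y_1,y_2)\Rightarrow(x_2,x_1)\approx_f(y_2,y_1)$, and your arguments for (ii) and (iii) verify those conditions for the transposed relation. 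The two readings agree once a flow exists (they amount to $t(x_1,x_2)=t(y_1,y_2)$ versus $t(x_1,y_1)=t(x_2,y_2)$, equivalent by additivity of flow times), but that equivalence is precisely the sort of commutativity the construction is meant to deliver and cannot be assumed here. The repairs are short and use exactly the tools you cite: (i) follows from the symmetry of $\sim_f$ by reversing each witness relation and taking the witness pair $(y',x')$; (ii) follows from the transitivity of $\sim_f$ after choosing the first witness of the second relation to be the second witness $y'$ of the first, so that the chains $(x_1,x')\sim_f(x_2,y')\sim_f(x_3,v')$ and $(y_1,x')\sim_f(y_2,y')\sim_f(y_3,v')$ close up; and in (iii) the two halves receive the arguments you assigned to the opposite halves --- the first equivalence of Lemma~\ref{L: f} gives $(x,f(x))\approx_f(y,f(y))$ via the witness $(x',f(x'))$, while the third gives $(x,y)\approx_f(f(x),f(y))$ by promoting $(x,x')\sim_f(y,y')$ to $(f(x),x')\sim_f(f(y),y')$.
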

\begin{proof}
By Lemma~\ref{L: equiv}, $\approx_f$ is an equivalence relation on $\Delta\times\Delta$, and by Lemma~\ref{L: op} each equivalence class is the graph of an order preserving homeomorphism from $\Delta$ to $\Delta$.  Each of the three conditions of Section~\ref{S: abstract} which define a flowable pair follow from properties of $\sim_f$. For example, (i) follows from symmetry of $\sim_f$, because if $(x_1,x_2)\approx_f (y_1,y_2)$ then there is a pair $(x',y')$  in $\Delta'\times\Delta'$ such that $(x_1,x')\sim_f(x_2,y')$ and $(y_1,x')\sim_f(y_2,y')$. Therefore $(x_2,y')\sim_f(x_1,x')$ and $(y_2,y')\sim_f(y_1,x')$, which means that $(x_2,x_1)\approx_f (y_2,y_1)$. Likewise condition (ii) follows immediately from transitivity of $\sim_f$, and condition (iii) follows immediately from the properties  established in Lemma~\ref{L: f}.
\end{proof}

We can now apply Proposition~\ref{P: real} to $(\approx_f,{f|}_\Delta)$. We state the outcome for clarity and convenience.

\begin{proposition}\label{P: flow}
There is a unique flow $(\Phi^t)$ on $\Delta$ that generates $\approx_f$ and satisfies $\Phi^1={f|}_\Delta$.
\end{proposition}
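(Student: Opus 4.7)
The plan is to observe that this proposition is a direct translation of Proposition~\ref{P: real} to the present setting, with all the preparatory work having been done already. Concretely, the oriented leaf $\Delta$ is order-isomorphic to $(\mathbb{R},<)$ via any orientation preserving parametrisation $\Delta\to\mathbb{R}$, and the restriction $f|_\Delta$ is a fixed point free order preserving homeomorphism of $\Delta$ because $f$ is an $\folR$-homeomorphism. Lemma~\ref{L: pair} has shown that $(\approx_f,f|_\Delta)$ is a flowable pair. Therefore the only step left to perform is to transport the statement of Proposition~\ref{P: real} along the chosen identification $\Delta\simeq\mathbb{R}$.

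Thus I would proceed as follows. First, fix an order preserving homeomorphism $h:\Delta\to\mathbb{R}$, and let $\widetilde{\approx}$ be the equivalence relation on $\mathbb{R}^2$ induced by pushing $\approx_f$ forward through $h\times h$, and let $\tilde f = h\circ f|_\Delta\circ h^{-1}$. Conditions (i), (ii), (iii) from Section~\ref{S: abstract} are invariant under such a conjugation, and each equivalence class of $\widetilde{\approx}$ is the graph of an order preserving homeomorphism of $\mathbb{R}$ because this property was established for $\approx_f$ in Lemma~\ref{L: op}. Hence $(\widetilde{\approx},\tilde f)$ is a flowable pair in the sense of Section~\ref{S: abstract}. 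Proposition~\ref{P: real} then provides a unique flow $(\widetilde{\Phi}^t)$ on $\mathbb{R}$ generating $\widetilde{\approx}$ with $\widetilde{\Phi}^1=\tilde f$. Setting $\Phi^t = h^{-1}\circ\widetilde{\Phi}^t\circ h$ yields the required flow on $\Delta$, and its uniqueness follows from the uniqueness clause in Proposition~\ref{P: real}.

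There is no real obstacle: the three substantive parts of the argument, namely continuity and monotonicity properties of the relation (Lemma~\ref{L: op}, Lemma~\ref{L: alpha}), the verification of the flowable pair axioms (Lemma~\ref{L: pair}), and the construction of the flow from a flowable pair (Proposition~\ref{P: real}), have all been carried out. The only thing to be careful about is the trivial bookkeeping of the identification $\Delta\simeq\mathbb{R}$, which as noted is harmless because every condition in sight is invariant under conjugation by order preserving homeomorphisms.
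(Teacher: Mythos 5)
Your proposal is correct and is exactly the paper's argument: the paper treats this proposition as an immediate consequence of Lemma~\ref{L: pair} together with Proposition~\ref{P: real}, implicitly identifying $\Delta$ with $\mathbb{R}$ just as you do explicitly. The extra bookkeeping about conjugating by an order preserving homeomorphism $h:\Delta\to\mathbb{R}$ is harmless and matches the paper's intent.
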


%%%%%%%%%%%%%%%%%%%%%%%%%%%%%%%%%%%%%%%%%%%%%%%%%%%%%%%%%%%%
\section{Proof of Theorem~\ref{T: main}}\label{S: mainProof}
%%%%%%%%%%%%%%%%%%%%%%%%%%%%%%%%%%%%%%%%%%%%%%%%%%%%%%%%%%%%

Suppose that $f$ is an $\folR$-homeomorphism that satisfies both the four and eight point matching properties. From Proposition~\ref{P: flow} we know that there is a unique flow $(\Phi^t)$ on $\Delta$ that generates $\approx_f$, and is such that $\Phi^1={f|}_\Delta$.
By symmetry of the roles played by $\Delta$ and $\Delta'$, we can also define an equivalence relation on $\Delta'\times\Delta'$, which we still denote by $\approx_f$, by requiring that
$(x'_1,y'_1)\approx_f(x'_2,y'_2)$ if and only if there is a pair $(x,y)$ in $\Delta\times \Delta$ such that both $(x,x'_{1})\sim_f (y,y'_{1})$ and $(x,x'_{2})\sim_f (y,y'_{2})$.
Results from Section~\ref{S: DeltaDelta}, and in particular Proposition~\ref{P: flow}, apply to the relation $\approx_f$ on $\Delta'\times\Delta'$. Thus we can extend the flow $(\Phi^t)$ from $\Delta$ to $\Delta\cup\Delta'$ in such a way that the restriction of $(\Phi^t)$ to $\Delta'$ is the unique flow on $\Delta'$ that generates $\approx_f$, and is such that $\Phi^1|_{\Delta'}={f|}_{\Delta'}$.

\begin{lemma}\label{L: boundary continuity}
For each pair $(x,x')$ in $\Delta\times\Delta'$ and each real number $t$ we have
\[
\left(x,x'\right)\sim_f\left(\Phi^t(x),\Phi^t(x')\right).
\]
\end{lemma}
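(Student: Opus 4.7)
My plan is to establish the lemma on dyadic rationals by an inductive halving argument powered by the eight point matching property, then pass to all real $t$ by a closed-set continuity argument.

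The case $t=1$ (and hence every integer $t$ by iteration and symmetry) is immediate: since $\Phi^1$ restricts to $f$ on both $\Delta$ and $\Delta'$, Lemma~\ref{L: f} gives $(x,x') \sim_f (f(x), f(x')) = (\Phi^1(x), \Phi^1(x'))$.

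The crux of the proof is the halving step. Assume the lemma is known at $t = \tau > 0$ dyadic, and set $g_\Delta = \Phi^\tau|_\Delta$, $g_{\Delta'} = \Phi^\tau|_{\Delta'}$, $h_\Delta = \Phi^{\tau/2}|_\Delta$, $h_{\Delta'} = \Phi^{\tau/2}|_{\Delta'}$. Because $(\Phi^t)$ generates $\approx_f$ on each of the two nonseparated leaves (Proposition~\ref{P: flow}), both $(\approx_f, g_\Delta)$ and $(\approx_f, g_{\Delta'})$ are flowable pairs, whose unique square-roots in the sense of Lemma~\ref{L: flowable pair} are $h_\Delta$ and $h_{\Delta'}$ respectively. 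In particular $(x, h_\Delta(x)) \approx_f (h_\Delta(x), g_\Delta(x))$ in $\Delta \times \Delta$; unpacking $\approx_f$ while using the freedom to take the auxiliary $\Delta'$-point equal to the given $x'$ (the remark at the start of Section~\ref{S: DeltaDelta}), we obtain $b' \in \Delta'$ satisfying
\[
(x, x') \sim_f (h_\Delta(x), b') \qquad \text{and} \qquad (h_\Delta(x), x') \sim_f (g_\Delta(x), b').
\]
I claim $b' = h_{\Delta'}(x')$. Apply the eight point matching property with $x_1 = x$, $y_1 = h_\Delta(x)$, $x_2 = h_\Delta(x)$, $y_2 = g_\Delta(x)$ in $\Delta$ and $x'_1 = x'$, $y'_1 = b'$, $x'_2 = b'$, $y'_2 = g_{\Delta'}(x')$ in $\Delta'$: the first and third of the four relations are precisely the two displayed ones, while the fourth, $(h_\Delta(x), b') \sim_f (g_\Delta(x), g_{\Delta'}(x'))$, is obtained by chaining the symmetric form of the first displayed relation with the induction hypothesis $(x,x') \sim_f (g_\Delta(x), g_{\Delta'}(x'))$. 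The matching property then delivers the second: $(x, b') \sim_f (h_\Delta(x), g_{\Delta'}(x'))$. Pairing this with the first displayed relation (taking $u=x$, $v=h_\Delta(x)$ in the definition of $\approx_f$ on $\Delta' \times \Delta'$) yields $(x', b') \approx_f (b', g_{\Delta'}(x'))$, and uniqueness of the square-root for the flowable pair $(\approx_f, g_{\Delta'})$ forces $b' = h_{\Delta'}(x')$.

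Iterating the halving step from $\tau = 1$ yields the lemma at every $t = 2^{-n}$; composing (using transitivity of $\sim_f$ together with the relation at $s$ applied to the pair $(\Phi^t(x), \Phi^t(x'))$ to deduce it at $s+t$) and symmetrizing then covers all dyadic $t$. For the final extension to arbitrary real $t$, the equivalence class of $(x,x')$ under $\sim_f$ is the graph of the homeomorphism $\alpha_{xx'}$ furnished by Lemma~\ref{L: alpha}, hence closed in $\Delta \times \Delta'$; since $t \mapsto (\Phi^t(x), \Phi^t(x'))$ is continuous, the set of valid $t$ is closed, and density of the dyadics completes the argument. The only delicate point is the halving step, which is where the eight point matching property does its essential work: it is exactly the mechanism that couples the independently-constructed flows on $\Delta$ and $\Delta'$ consistently with $\sim_f$.
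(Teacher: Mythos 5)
Your proof is correct, but it takes a genuinely different route from the paper's. The paper fixes $t$ and defines a candidate map $\Psi^t$ on $\Delta'$ by transporting $\Phi^t|_\Delta$ across the graph of $\sim_f$, setting $\Psi^t(x')=\alpha(x,x',\Phi^t(x))$; it then verifies that $(\Psi^t)$ is a flow on $\Delta'$ generating $\approx_f$ with $\Psi^1=f|_{\Delta'}$, and concludes $\Psi^t=\Phi^t$ from the uniqueness clause of Proposition~\ref{P: flow}, which gives all real $t$ in one stroke. You instead prove the identity time-by-time: the base case $t=1$ from Lemma~\ref{L: f}, a halving step in which the eight point matching property (applied to the configuration $x, h_\Delta(x), g_\Delta(x)$ and $x', b', g_{\Delta'}(x')$) together with the uniqueness of the ``geometric mean'' point from Corollary~\ref{C: sqrt} forces $b'=\Phi^{\tau/2}(x')$, then composition over dyadics and a closed-graph/density argument for real $t$. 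The halving step checks out: relations (1) and (3) of the eight point configuration come from unpacking $(x,h_\Delta(x))\approx_f(h_\Delta(x),g_\Delta(x))$ with auxiliary point $x'$, relation (4) from symmetry, transitivity and the inductive hypothesis, and the derived relation (2) paired with (1) exhibits $b'$ as the square-root point for $(\approx_f,g_{\Delta'})$ at $x'$. What each approach buys: the paper's is shorter once Proposition~\ref{P: flow} is in hand but must verify that $\Psi$ is a genuine jointly continuous flow; yours is more hands-on, re-runs the dyadic approximation that Proposition~\ref{P: real} already packages, but has the virtue of isolating exactly where the eight point matching property acts---namely in coupling the two independently constructed square roots on $\Delta$ and $\Delta'$. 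One small point worth making explicit if you write this up: the claim that $(\approx_f,\Phi^\tau|_\Delta)$ is a flowable pair uses that the orbit of each point under the flow on $\Delta$ is all of $\Delta$ (so that condition (iii) holds for $\Phi^\tau$, not just for $f=\Phi^1$); this follows from $\Phi^n=f^n$ being unbounded in both directions and continuity of the orbit map.
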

\begin{proof}
Choose a point $x'$ in $\Delta'$ and a real number $t$. For any pair $(x,y)$ in $\Delta\times\Delta$ we have $(x,y)\approx_f (\Phi^t(x),\Phi^t(y))$. Therefore there is a unique point $z$ in $\Delta'$ such that
\begin{equation}\label{E: Psi}
(x,x')\sim_f (\Phi^t(x),z),\quad (y,x')\sim_f (\Phi^t(y),z).
\end{equation}
We define a function $\Psi^t:\Delta'\to\Delta'$ by the equation $\Psi^t(x')=z$. In other words,
\[
\Psi^t(x')=\alpha(x,x',\Phi^t(x)),
\]
and this definition does not depend on the choice of $x$. Using properties of $\alpha$ we deduce that $\Psi^t$ is an order preserving homeomorphism of $\Delta'$ which has no fixed points whenever $t\neq 0$, and which coincides with the identity map when $t=0$. Moreover, using continuity of $\Phi$, we have that $\Psi$ is jointly continuous in $t$ and $x$. Given two real numbers $s$ and $t$ we have that
\[
(x,x')\sim_f (\Phi^t(x),\Psi^t(x'))\sim_f (\Phi^s\Phi^t(x),\Psi^s\Psi^t(x'))
\]
and
\[
(x,x')\sim_f  (\Phi^{s+t}(x),\Psi^{s+t}(x')).
\]
Since $\Phi^{s+t}(x)=\Phi^s\Phi^t(x)$ we conclude that $\Psi^{s+t}(x')=\Psi^s\Psi^t(x')$. Therefore $(\Psi^t)$ is a flow. We only have to
 show that $(\Psi^t)=(\Phi^t)$ because then the lemma follows from \eqref{E: Psi}.

We appeal to the uniqueness part of Proposition~\ref{P: flow} applied to $\Delta'$. First observe that
\[
\Psi^1(x')=\alpha(x,x',f(x))=f(x'),
\]
by Lemma~\ref{L: f}. Second observe that, given a pair $(x',y')$ in $\Delta'\times\Delta'$, we have, by \eqref{E: Psi},
\[
(x,x')\sim_f (\Phi^t(x),\Psi^t(x')),\quad (x,y')\sim_f (\Phi^t(x),\Psi^t(y')).
\]
Hence $(\Psi^t(x'),\Psi^t(y'))\approx_f (x',y')$. Therefore the leaves of $\Delta'\times\Delta'$ under $\approx_f$ coincide with the leaves of $\Delta'\times\Delta'$ resulting from the flow $(\Psi^t)$. In other words, $\approx_f$ is generated by $(\Psi^t)$. We can now invoke the uniqueness part of Proposition~\ref{P: flow} to see that $(\Psi^t)=(\Phi^t)$, as required.
\end{proof}

\begin{proof}[Proof of Theorem~\ref{T: main}]
To prove Theorem~\ref{T: main} we must extend  the flow $\Phi^t$ from $\Delta\cup\Delta'$
to the whole plane, as an $\folR$-flow with $\Phi^1=f$. Let $O=U_1\cup\Delta\cup U_2$ (refer to Figure~\ref{F: Reeb}).
 The restriction of $\folR$ to  $O$ is a trivial foliation, and we may choose an orientation preserving  homeomorphism $H:O\rightarrow \mathbb{R}^2$ such that (i) the leaves of $\folR$ are mapped to horizontal lines, (ii) $H(\Delta)=\mathbb{R}\times\{0\}$, (iii) $H\Phi^tH^{-1}(x,0)=(x+t,0)$
 for each $x$ in $\mathbb{R}$, and (iv) $Hf H^{-1}(x,y)=(x+1,y)$ for each $(x,y)$ in $\mathbb{R}^2$. Let $\tau_t$ denote the map $(x_1,x_2)\mapsto (x_1+t,x_2)$ on $\mathbb{R}^2$. We extend the domain of $\Phi^t$ to $O$ by defining $\Phi^t=H^{-1}\tau_t H$. This is a flow on $O$ whose leaves are the leaves of $\folR$, and which satisfies $\Phi^1(x)=f(x)$ for $x$ in $O$. Likewise we can extend the domain of $\Phi^t$ from $\Delta'$ to the closed set $O'=U_3\cup\Delta'$ in a such a way that $\Phi^1(x)=f(x)$ for $x$ in $O'$.

We have now defined a  collection of maps $(\Phi^t)$, each of which is a bijection of $\mathbb{R}^2$, and they satisfy the property that $\Phi^{s+t}(x)=\Phi^s\Phi^t(x)$ for each point $x$ in $\mathbb{R}^2$ and each pair of real numbers $s$ and $t$. Moreover, $\Phi$ is jointly continuous in $x$ and $t$ whenever $x$ lies outside $\Delta'$. Suppose that $\Phi$ is \emph{not} jointly continuous for points $x$ in $\Delta'$. Then there is a sequence $(x_k)$ in $U_2$ that converges to a point $x'$ in $\Delta'$, and a sequence $(t_k)$ in $\mathbb{R}$ that converges to a real number $t$, such that the sequence $\left(\Phi^{t_k}(x_k)\right)$ either converges to a point $y'$ on $\Delta'$ that is distinct from $\Phi^t(x')$, or else diverges to $\infty$. The second case cannot occur because we can choose $M>0$ such that, for each $k$, $-M< t_{k} <M$, which means that $f^{-M} ( x_{k} ) <  \Phi^{t_{k}}(x_{k}) < f^M(x_{k})$  on $L(x_{k})$, and hence $\left(\Phi^{t_{k}}(x_{k})\right)$ lies in a bounded subset of $U_2$. For the first case, we choose a sequence of integers $(n_k)$ such that $(f^{-n_k}(x_{n_k}))$ converges to a point $x$ on $\Delta$. By Lemma~\ref{L: sequences} we can choose a sequence $(y_k)$ in $\mathbb{R}^2$ such that $y_k\to x$, $f^k(y_k)\to x'$, and $y_{n_k} = f^{-n_k}(x_{n_k})$. Now, $\Phi^{t_k}(y_k)\in L(y_k)$ for each $k$, and $\Phi^{t_k}(y_k) \to \Phi^t(x)$ as $k\to\infty$, therefore, by the four point matching property, there is a point $z'$ on $\Delta'$ such that $f^k\left(\Phi^{t_k}(y_k)\right)\to z'$. The $n_k$th term from this last sequence is $\Phi^{t_{n_k}}(x_{n_k})$; hence $z'=y'$. Thus $(x,x')\sim_f (\Phi^t(x),y')$. By Lemma~\ref{L: boundary continuity} we also have
$(x,x')\sim_f (\Phi^t(x),\Phi^t(x'))$. This implies that $y'=\Phi^t(x')$, which is a contradiction. Therefore $\Phi$ is jointly continuous throughout $\mathbb{R}^2$.

The uniqueness part of Theorem~\ref{T: main} follows immediately from the uniqueness part of Proposition~\ref{P: flow}.
\end{proof}

%%%%%%%%%%%%%%%%%%%%%%%%%%%%%%%%%%%%%%%%%%%%%%%%%%%%%%%%%%%%%%%%%%%%%%%%%%%
\section{Necessity of the eight point matching property}\label{S: examples}
%%%%%%%%%%%%%%%%%%%%%%%%%%%%%%%%%%%%%%%%%%%%%%%%%%%%%%%%%%%%%%%%%%%%%%%%%%%

In this section we supply two examples. The first is an $\folR$-homeomorphism that satisfies the four point matching property, but does not satisfy the eight point matching property, which means that, by Theorem~\ref{T: main}, it is not $\folR$-flowable. We thereby establish the independence of the four and eight point matching properties, and prove that Theorem~\ref{T: main} fails in the absence of the eight point matching property. Our second example is an $\folR$-homeomorphism that does not satisfy the four point matching property.

To simplify computations in our first example, we map the middle section $U_2$ of Figure~\ref{F: Reeb} to the quadrant $Q=\{(x,y)\in\mathbb{R}^2\,:\, x> 0, y>0 \}$ by an orientation preserving homeomorphism, in such a way that the resulting foliation of $Q$, which we also denote by $\folR$, consists of the curves $xy=c$, for $c>0$. In this new model, $\Delta$ is the positive $y$-axis and $\Delta'$ is the positive $x$-axis. The foliation is oriented such that if $(x_1,y_1)$ and $(x_2,y_2)$ lie on the same leaf in $Q$ then $(x_1,y_1)<(x_2,y_2)$ if and only if $x_1<x_2$. The sections $U_1$ and $U_3$ of Figure~\ref{F: Reeb} play no role in the four or eight point matching properties, so their omission from this new model is insignificant.

For the construction, we need an auxiliary function $\beta$ provided by the following lemma.
\begin{lemma}\label{B}
There exists a function $\beta: [1/2,2]\to\mathbb{R}$ with the following properties:
\begin{enumerate}
\item[(i)] $\beta(\theta)=1$ if $\theta\in \left[1/2,3/4\right]\cup\left[1,3/2\right]\cup\{2\}$, and elsewhere $\beta(\theta)>1$;
\item[(ii)] if $\theta\in [1/2,1]$ then $\beta(\theta)=\beta(2\theta)$;
\item[(iii)] $\theta/\beta(\theta)$ is a strictly increasing self homeomorphism of $\left[1/2,2\right]$;
\item[(iv)] $\theta/\beta(\theta)^2$ is a strictly increasing self homeomorphism of $\left[1/2,2\right]$.
\end{enumerate}
\end{lemma}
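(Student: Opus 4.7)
The plan is to exploit condition (ii) to reduce the construction to the interval $[1,2]$, and then to parameterize $\beta$ via the quotient $h(\theta)=\theta/\beta(\theta)^2$, which makes (iii) and (iv) simultaneously transparent. Condition (i) forces $\beta\equiv 1$ on $[1,3/2]\cup\{2\}$, and via (ii) also on $[1/2,3/4]$ (consistent with the stated values of (i) on this interval). Hence the essential content is to define $\beta$ on $(3/2,2)$ with $\beta>1$ in the interior and $\beta\to 1$ at both endpoints; the values on $(3/4,1)$ are then determined by (ii) via $\beta(\theta)=\beta(2\theta)$.

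For the construction on $[3/2,2]$, choose any strictly increasing self homeomorphism $h$ of $[3/2,2]$ with $h(\theta)<\theta$ for $\theta\in(3/2,2)$; a concrete choice is $h(\theta)=\tfrac{3}{2}+2(\theta-\tfrac{3}{2})^{2}$. Set $\beta(\theta)=\sqrt{\theta/h(\theta)}$ on $[3/2,2]$. Then $\beta(3/2)=\beta(2)=1$ and $\beta>1$ on the interior, giving (i) on this piece. Property (iv) on $[3/2,2]$ is immediate from $\theta/\beta(\theta)^{2}=h(\theta)$. Property (iii) on $[3/2,2]$ follows from the identity $\theta/\beta(\theta)=\sqrt{\theta\cdot h(\theta)}$, a product of two strictly increasing positive factors, which sends $3/2\mapsto 3/2$ and $2\mapsto 2$.

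On $(3/4,1)$, using (ii), one computes $\theta/\beta(\theta)=\sqrt{\theta\cdot h(2\theta)/2}$ and $\theta/\beta(\theta)^{2}=h(2\theta)/2$; both are strictly increasing and map $(3/4,1)$ onto itself, matching the identity at the endpoints $3/4$ and $1$. On the remaining intervals where $\beta\equiv 1$, both $\theta/\beta$ and $\theta/\beta^{2}$ coincide with $\theta$. Continuity at the junction points $3/4,1,3/2,2$ is automatic since $h$ matches the identity at $3/2$ and $2$. Assembling the pieces therefore yields continuous strictly increasing self homeomorphisms of $[1/2,2]$, verifying (iii) and (iv). The only real subtlety to anticipate is the simultaneous compatibility of (iii) and (iv) on a single bump interval, since arranging $\theta/\beta^{2}$ to be a homeomorphism could in principle destroy monotonicity of $\theta/\beta$; the parameterization by $h$ resolves this because $\theta/\beta=\sqrt{\theta h}$ inherits monotonicity from $h$ for free.
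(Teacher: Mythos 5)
Your construction is correct, and while it shares the paper's overall architecture (an auxiliary increasing homeomorphism $h$ with $h(\theta)<\theta$ on the bump interval, a definition of $\beta$ as a ratio involving $h$, and the doubling relation (ii) to propagate the bump to the other interval), it differs in the key technical step in a way that genuinely simplifies matters. The paper sets $\beta(\theta)=\theta/h(\theta)$, so that $\theta/\beta(\theta)=h(\theta)$ makes (iii) immediate, but then (iv) becomes $\frac{d}{d\theta}\bigl(h(\theta)^2/\theta\bigr)>0$, which forces the paper to assume $h$ continuously differentiable with the quantitative bound $h'(\theta)>3/4$ and to run a short calculus computation. You instead set $\beta(\theta)=\sqrt{\theta/h(\theta)}$, so that $\theta/\beta(\theta)^2=h(\theta)$ makes (iv) immediate, and (iii) reduces to the observation that $\theta/\beta(\theta)=\sqrt{\theta\,h(\theta)}$ is a square root of a product of two positive strictly increasing functions fixing the endpoints --- no differentiability or derivative estimate is needed, and \emph{any} strictly increasing self homeomorphism $h$ of the bump interval with $h<\mathrm{id}$ in the interior works. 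Your endpoint checks ($\sqrt{(3/2)(3/2)}=3/2$, $\sqrt{2\cdot 2}=2$, and $\sqrt{\theta h(2\theta)/2}$ sending $3/4\mapsto 3/4$, $1\mapsto 1$ on the reflected interval) are all correct, as is the verification that the concrete choice $h(\theta)=\tfrac32+2(\theta-\tfrac32)^2$ satisfies $h(\theta)<\theta$ on $(3/2,2)$. In short: same skeleton, but your parameterization by $\theta/\beta^2$ rather than $\theta/\beta$ trades the paper's derivative estimate for a purely order-theoretic argument, which is a modest but real improvement in economy.
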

\begin{proof}
We choose a continuously differentiable homeomorphism $h:\left[1/2,1\right]\rightarrow\left[1/2,1\right]$  that satisfies $h(\theta)=\theta$ on $\left[1/2,3/4\right]\cup \{1\}$, $h(\theta)< \theta$ on $\left]3/4,1\right[$, and $h'(\theta)>3/4$ everywhere. Define $\beta:\left[1/2,1\right]\rightarrow [1,+\infty\, [$ by $\beta(\theta)=\theta/h(\theta)$. Observe that
\begin{align*}
\frac{d}{d\theta}\left(\frac{\theta}{\beta(\theta)^2}\right)&=\frac{d}{d\theta}\left(\frac{h(\theta)^2}{\theta}\right) \\ &= \frac{h(\theta)}{\theta^2}(2\theta h'(\theta)-h(\theta)) \\&> \frac{h(\theta)}{\theta^2}\left(\frac{3\theta}{2}-h(\theta)\right)>0.
\end{align*}
Therefore $\theta/\beta(\theta)^2$ is an increasing homeomorphism of $\left[1/2,1\right]$. To complete the construction, extend the domain of $\beta$ to $\left[1/2,2\right]$ by defining, for $\theta$ in $[1,2]$, $\beta(\theta)=\beta(\theta/2)$.
\end{proof}

Let $k$ be the self homeomorphism of the set
\[
S = \left\{(x,y)\in Q\,:\, \frac{1}{2}\leq\frac{y}{x}< 2\right\},
\]
 given by $k(x,y)=(\beta(y/x)x,y/\beta(y/x))$. That $k$ is a homeomorphism follows from Lemma~\ref{B}(iv). Define $g$ to be the self homeomorphism of $Q$ given by $(x,y)\mapsto (2x,y/2)$. Finally, define $f$ to be the self homeomorphism of $Q$ that is equal to $gk$ on $S$, and elsewhere $f$ coincides with $g$. We will show that $f$ satisfies the four point matching property, but not the eight point matching property.

First we show that $f$ satisfies the four point matching property. Choose a sequence $(a_n,b_n)$ in $Q$ that converges to $(0,b)$ on $\Delta$, and is such that $f^n(a_n,b_n)$ converges to $(a,0)$ on $\Delta'$. Let $p_n$ be the unique integer such that $(2^{p_n}a_n,b_n/2^{p_n})$ lies in $S$. In other words, $p_n$ is the unique integer such that the quantity $\theta_n=b_n/(a_n2^{2p_n})$ satisfies $1/2 \leq \theta_n <2$. Observe that, for sufficiently large $n$,
\begin{equation}\label{A}
f^n(a_n,b_n) = \left(2^na_n\beta(\theta_n),\tfrac{b_n}{2^n\beta(\theta_n)}\right).
\end{equation}

\begin{lemma}\label{L: beta}
There is an element $\theta_0$ of $[1/2,2]$ such that $\beta(\theta_n)\to\beta(\theta_0)$ as $n\to\infty$.
\end{lemma}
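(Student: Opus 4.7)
The plan is to exploit equation~\eqref{A}. Substituting $a_n = b_n/(\theta_n 2^{2p_n})$ into the first coordinate of $f^n(a_n,b_n)\to(a,0)$ rearranges to
\[
2^{n-2p_n}\cdot\frac{b_n\,\beta(\theta_n)}{\theta_n}\to a.
\]
Because $\beta$ is continuous on the compact interval $[1/2,2]$ it is bounded, and also $\beta\geq 1$; since $\theta_n\in[1/2,2)$ and $b_n\to b>0$, this forces $2^{n-2p_n}$ to lie between two positive constants. The integer sequence $n-2p_n$ is therefore bounded and attains only finitely many values.

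Passing to a subsequence on which $n-2p_n$ equals a fixed integer $m$, the same display rearranges to $\theta_n/\beta(\theta_n)\to b\cdot 2^m/a$, with the right-hand side necessarily in $[1/2,2]$. By Lemma~\ref{B}(iii), $h(\theta):=\theta/\beta(\theta)$ is a homeomorphism of $[1/2,2]$, so continuity of $h^{-1}$ gives $\theta_n\to\theta_0^{(m)}:=h^{-1}(b\cdot 2^m/a)$, and continuity of $\beta$ then yields $\beta(\theta_n)\to\beta(\theta_0^{(m)})$ along this subsequence.

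It remains to check that distinct achievable values of $m$ give the same limit. A short counting argument using $b\cdot 2^m/a\in[1/2,2]$ shows at most three consecutive integers $m$ can arise. When both $m$ and $m+1$ are achieved, necessarily $\theta_0^{(m)}\in[1/2,1]$; property~(ii) of Lemma~\ref{B} then says $\beta(2\theta)=\beta(\theta)$ on $[1/2,1]$, so $h(2\theta_0^{(m)})=2h(\theta_0^{(m)})=h(\theta_0^{(m+1)})$, and injectivity of $h$ forces $\theta_0^{(m+1)}=2\theta_0^{(m)}$, giving $\beta(\theta_0^{(m+1)})=\beta(\theta_0^{(m)})$. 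In the extremal case where three consecutive values occur, the $\theta_0^{(m)}$ must land on $\{1/2,1,2\}$, where $\beta\equiv 1$ by property~(i). Thus every subsequential limit of the bounded sequence $(\beta(\theta_n))$ equals a common value, which we call $\beta(\theta_0)$, and the sequence converges.

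The principal obstacle is the final reconciliation of subsequential limits: the dyadic symmetry $\beta(\theta)=\beta(2\theta)$ built into property~(ii) is precisely what ensures that different values of $n-2p_n$ yield the same value of $\beta$, thereby making $\beta(\theta_n)$ convergent even though $\theta_n$ itself may oscillate between limits related by doubling.
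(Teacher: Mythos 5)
Your proof is correct and follows essentially the same route as the paper's: both arguments rest on the identity $\theta_n/\beta(\theta_n)=2^{n-2p_n}\,b_n/(2^na_n\beta(\theta_n))$, the fact that the integer $n-2p_n$ is eventually confined to at most two (generically) or three (when $b/a$ is a power of $2$) values $m$ with $2^mb/a\in[1/2,2]$, the homeomorphism property (iii) of $\theta/\beta(\theta)$ to pin down $\theta_0^{(m)}$, and the dyadic symmetry (ii) (together with (i) in the extremal case) to see that all these candidates give the same value of $\beta$. The only difference is organizational — you bound $n-2p_n$ globally first and then split into subsequences, whereas the paper starts from an arbitrary limit point of $(\theta_n)$ — and this does not change the substance.
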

\begin{proof}
Let $\theta^*$ be a limit point of $(\theta_n)$. Then there is a subsequence $(\theta_{n_k})$ that converges to $\theta^*$. Since $(a_n,b_n)\to (0,b)$ and $f^n(a_n,b_n)\to (a,0)$,  we see that $b_n\to b$ and, by equation \eqref{B},   $2^na_n\beta(\theta_n)\to a$ as $n\to\infty$. Observe also that
\[
\frac{\theta_n}{\beta(\theta_n)} = 2^{n-2p_{n}}\frac{b_n}{2^na_n\beta(\theta_n)}.
\]
  It follows that the integral sequence $\left(2^{n_k-2p_{n_k}}\right)$ converges to a positive number. Hence $n_k-2p_{n_k}$ is eventually equal to some integer $m$, which means that $\theta^*/\beta(\theta^*)=2^mb/a$. Suppose that $b/a$ is not itself a power of $2$. There are precisely two numbers of the form $2^mb/a$ that lie in $[1/2,2]$; call them $v_0$ and $2v_0$.  By Lemma~\ref{B}(i) and (iii) we can choose $\theta_0$ in $[1/2,1]$ such that $\theta_0/\beta(\theta_0)=v_0$. By Lemma~\ref{B}(ii), $2\theta_0/\beta(2\theta_0)=2v_0$. Thus, again using Lemma~\ref{B}(iii), there are precisely two values of $\theta$ in $[1/2,2]$, namely $\theta_0$ and $2\theta_0$, such that $\theta/\beta(\theta)$ is equal to $b/a$ multiplied by a power of $2$. This means that $\theta_0$ and $2\theta_0$ are the only limit points of $(\theta_n)$, and since $\beta(\theta_0)=\beta(2\theta_0)$, we see that $\beta(\theta_n)\to\beta(\theta_0)$. In the special case when $b/a$ is a power of $2$, there are three possible limit points of $(\theta_n)$, namely $1/2$, $1$, and $2$, and since $\beta(1/2)=\beta(1)=\beta(2)$, we see that $\beta(\theta_n)\to\beta(1/2)$.
\end{proof}

Now consider a sequence $(c_n,d_n)$ in $Q$ that converges to a point $(0,d)$ on $\Delta$, and is such that $c_nd_n=a_nb_n$ for each $n$ (that is, $(c_n,d_n)$ lies on the same leaf as $(a_n,b_n)$). Let $q_n$ be the unique integer such that $(2^{q_n}c_n,d_n/2^{q_n})$ lies in $S$. In other words, $q_n$ is the unique integer such that the quantity $\psi_n=d_n/(c_n2^{2q_n})$ satisfies $1/2 \leq \psi_n <2$.
Observe that
\begin{equation}\label{D}
f^n(c_n,d_n) = \left(2^nc_n\beta(\psi_n),\tfrac{d_n}{2^n\beta(\psi_n)}\right).
\end{equation}
Now,
\[
\psi_n = \frac{d_n}{c_n2^{2q_n}}=\frac{d_n^2}{a_nb_n2^{2q_n}}=\frac{d_n^22^{n-2q_n}}{2^na_nb_n}
\]
Since $2^na_n\beta(\theta_n)\to a$, and by Lemma~\ref{L: beta} $\beta(\theta_n)\to\beta(\theta_0)$, we see that
\[
\psi_n 2^{2q_n-n} \rightarrow \frac{d^2\beta(\theta_0)}{ab}
\]
as $n\rightarrow\infty$. Therefore each limit point of $(\psi_n)$ is equal to $d^2\beta(\theta_0)/(ab)$ multiplied by a power of $2$. Thus either the limit set of $(\psi_n)$ is a subset of $\{\psi_0,2\psi_0\}$ for some $\psi_0\in ]1/2,1[$, or the limit set of $(\psi_n)$ is a subset of $\{\psi_0,2\psi_0,4\psi_0\}$, where $\psi_0=1/2$. Given that $\beta(\psi_0)=\beta(2\psi_0)$ we deduce that
\[
2^nc_n\beta(\psi_n) = \frac{2^na_nb_n\beta(\psi_n)}{d_n} \rightarrow \frac{ab\beta(\psi_0)}{d\beta(\theta_0)}
\]
and
\[
\frac{d_n}{2^n\beta(\psi_n)}\rightarrow 0
\]
as $n\rightarrow\infty$. That is, $f^n(c_n,d_n)$ converges to the point $\left(ab\beta(\psi_0)/(d\beta(\theta_0)),0\right)$ on $\Delta'$, as required.

Finally, we prove that the eight point matching property fails for $f$.

Choose positive real numbers $a$, $b$, and $\delta>1$ such that $b/a< 3/4 < \delta^2b/a$, and $|\delta^2b/a-b/a|<1/100$. Now choose positive numbers $c>a$ and $d<b$ such that $d/c$, $\delta^2 d/c$, $d/a$, $\delta^2 d/a$, $b/c$, and $\delta^2 b/c$ each lie between $1/2$ and $b/a$. A graph of $\beta$ is shown in Figure~\ref{F: beta}.

\begin{figure}[ht]
\centering
\includegraphics[scale=1]{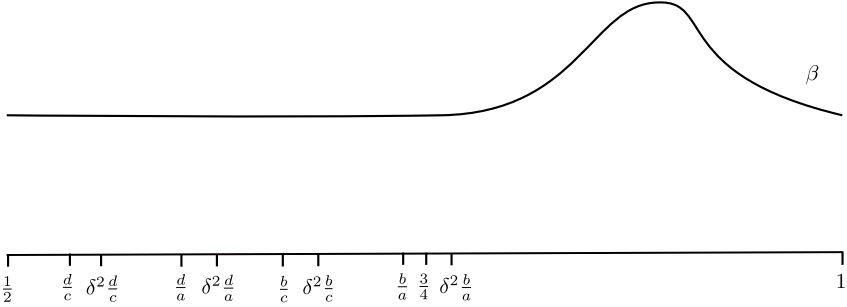}
\caption{The graph of $\beta$.}
\label{F: beta}
\end{figure}

To correspond with the notation for the definition of the eight point matching property in the introduction, let us define $x_1=(0,b)$, $y_1=(0,\delta b)$, $x'_1=(a,0)$, $y'_1=(a/\delta,0)$, $x_2=(0,d)$, $y_2=(0,\delta d)$, $x'_2=(c,0)$, and $y'_2=(c/\delta,0)$. Observe that
\[
\left(\tfrac{c}{2^{2n}},d\right)\to (0,d),\quad f^{2n}\left(\tfrac{c}{2^{2n}},d\right)\to \left(\beta\left(\tfrac{d}{c}\right)c,0\right) = (c,0),
\]
\[
\left(\tfrac{c}{\delta 2^{2n}},\delta d\right)\to \left(0,\delta d\right),\quad f^{2n}\left(\tfrac{c}{\delta 2^{2n}},\delta d\right)\to \left(\beta\left(\delta^2\tfrac{d}{c}\right)\tfrac{c}{\delta},0\right) = \left(\tfrac{c}{\delta},0\right),
\]
and
\[
\left(\tfrac{a}{2^{2n}},d\right)\to (0,d),\quad f^{2n}\left(\tfrac{a}{2^{2n}},d\right)\to \left(\beta\left(\tfrac{d}{a}\right)a,0\right) =(a,0),
\]
\[
\left(\tfrac{a}{\delta 2^{2n}},\delta d\right)\to \left(0,\delta d\right),\quad f^{2n}\left(\tfrac{a}{\delta 2^{2n}},\delta d\right)\to \left(\beta\left(\delta^2\tfrac{d}{a}\right)\tfrac{a}{\delta},0\right) =\left(\tfrac{a}{\delta},0\right),
\]
and
\[
\left(\tfrac{c}{2^{2n}},b\right)\to (0,b),\quad f^{2n}\left(\tfrac{c}{2^{2n}},b\right)\to \left(\beta\left(\tfrac{b}{c}\right)c,0\right) =(c,0),
\]
\[
\left(\tfrac{c}{\delta 2^{2n}},\delta b\right)\to \left(0,\delta b\right),\quad f^{2n}\left(\tfrac{c}{\delta 2^{2n}},\delta b\right)\to \left(\beta\left(\delta^2\tfrac{b}{c}\right)\tfrac{c}{\delta},0\right) =\left(\tfrac{c}{\delta},0\right),
\]
and
\[
\left(\tfrac{a}{2^{2n}},b\right)\to (0,b),\quad f^{2n}\left(\tfrac{a}{2^{2n}},b\right)\to \left(\beta\left(\tfrac{b}{a}\right)a,0\right) = (a,0),
\]
\[
\left(\tfrac{a}{\delta 2^{2n}},\delta b\right)\to \left(0,\delta b\right),\quad f^{2n}\left(\tfrac{a}{\delta 2^{2n}},\delta b\right)\to \left(\beta\left(\delta^2\tfrac{b}{a}\right)\tfrac{a}{\delta},0\right) \neq \left(\tfrac{a}{\delta},0\right).
\]
In terms of the equivalence relation $\sim_f$ on $\Delta\times \Delta'$, these equations show that $(x_2,x_2')\sim_f (y_2,y_2')$, $(x_2,x_1')\sim_f (y_2,y_1')$, and $(x_1,x_2')\sim_f (y_1,y_2')$, but $(x_1,x_1')\nsim_f (y_1,y_1')$. Hence the eight point matching property fails for $f$.

It remains to discuss our second example: an $\folR$-homeomorphism that does not satisfy the four point matching property. Examples of suitable homeomorphisms, without mention of the four point matching property, are found in the literature; see, for instance, \cite[Section 6]{BeLe2003}. In the notation of \cite{BeLe2003}, the $\mathcal{F}_f$-homeomorphism $h_f$, where $\theta(t)=\frac{1}{4} \sin^2(\pi t)$, does not satisfy the four point matching property. To see this, let $(x_k)$ and $(y_k)$, where $y_k\in L(x_k)$, be the convergent sequences in $\mathbb{R}^2$ that correspond to the sequences $(0,1/k)$ and $(1/2,1/k)$ on the chart $P_1$. Then, by switching to the chart $P_2$, it can be checked that $(h_f^k(x_k))$ converges but $(h_f^k(y_k))$ diverges, and hence the four point matching property fails.

\end{document}